\definecolor{light-gray}{gray}{0.95}
\newcommand{\mc}[1]{{\mathcal #1}}
\newcommand{\bb}[1]{{\mathbb #1}}
\date{\today}
\begin{document}

\title*{Derivation of the Stochastic Burgers equation from the WASEP}
\author{Patr\' icia Gon\c calves}
\institute{Patrícia Gonçalves \at Departamento de Matem\'atica, PUC-RIO, Rua Marqu\^es de S\~ao Vicente, no. 225, 22453-900, Rio de Janeiro, Rj-Brazil and
CMAT, Centro de Matem\'atica da Universidade do Minho, Campus de Gualtar, 4710-057 Braga, Portugal.
\email{patricia@mat.puc-rio.br } }

\maketitle

\abstract*{In these notes we give a simple proof of the second-order Boltzmann-Gibbs Principle, which is the main tool in order to prove that the equilibrium fluctuations of the WASEP are given, in the regime of the critical strength asymmetry, by the Stochastic Burgers equation.}

\abstract{In these notes we give a simple proof of the second-order Boltzmann-Gibbs Principle, which is the main tool in order to prove that the equilibrium fluctuations of the WASEP are given, in the regime of the critical strength asymmetry, by the Stochastic Burgers equation.}

\section{Introduction}
The KPZ equation was proposed in \cite{KPZ} as the default stochastic partial differential equation ruling the evolution of the profile of a growing interface. For a time $t$ and a space variable $x$, if $h_t(x)$ denotes the height of that interface at $t$ and $x$, then the KPZ equation reads as
\begin{equation*}
\label{KPZ0}
d h_t = A\Delta  h_t dt +B(\nabla  h_t)^2 dt + \sqrt{C}  \mc W_t,
\end{equation*}
where $A,B,C$ are constants which depend on the thermodynamical quantities of the interface, $\Delta=\partial^2_x$, $\nabla=\partial_x$ and  $\mc W_t$ is a space-time white noise. This equation is ill-posed since its solutions are similar, locally, to the Brownian motion and the main problem comes from the nonlinear term $(\nabla  h_t)^2$, which makes no sense. For a very detailed exposition on the KPZ equation we refer to \cite{Qua2}. A way to solve this equation is to consider its Cole-Hopf solution, namely, $u_t(x)=e^{\frac{B}{A}h_t(x)}$. The Cole-Hopf solution $u_t(x)$ solves the linearized version of the KPZ equation, namely, the stochastic heat equation (SHE) with multiplicative noise given by
\begin{equation*}
\label{SHE}
d u_t = A\Delta  u_t dt +\frac{BC}{A}u_t\mc W_t.
\end{equation*}
Since the equation is now linear, its solutions can be constructed and characterized by means of the constants $A,B,C$. In \cite{BG} it was proved that
these Cole-Hopf solutions can be obtained as a scaling limit of the weakly asymmetric simple exclusion process (wasep). Their approach consists in taking a microscopic analogue of the Cole-Hopf solutions, that is, they exponentiate the underling microscopic dynamics and use this new process to characterize the solutions of the SHE by means of martingale problems. The advantage of this new process is that its martingale decomposition has a compensator which is "linear", contrarily to what happens when dealing with the original process.

More recently, there has been many advances in characterizing the KPZ universality class. The approach used is to describe several functionals of asymmetric conservative systems in terms of determinantal formulas that can be solved by using the machinery of random matrix theory. This approach allows to obtain very much detailed information about the solutions of the KPZ equation, nevertheless, it depends highly on the properties of the underlying model. For more details on this we refer to \cite{C} and references therein.

Another stochastic partial differential equation related to the KPZ equation is the  stochastic Burgers equation (SBE) which can be obtained from the KPZ equation, at least formally, by taking $\mc Y_t=\nabla h_t$.  In this case, $\mc Y_t$ satisfies
\begin{equation*}
\label{KPZ}
d \mc Y_t = A\Delta \mc Y_t dt +B\nabla \mc Y_t^2 dt + \sqrt{C} \nabla \mc W_t.
\end{equation*}

In \cite{GJ3} the SBE was derived from the scaling limits of general exclusion processes in equilibrium and in \cite{GJS} the SBE is also obtained for a general class of models in a non-equilibrium scenario. The approach is to work directly with given microscopic dynamics and to characterize the solutions by means of a martingale problem. The difficulty in this approach is to make sense to the non-linear term in the SBE. To overcome that, we used a second order Boltzmann-Gibbs Principle which allows to identify the non-linear term in the SBE from the underlying dynamics. This principle is the most difficult step to achieve in this approach and the purpose of this paper is to give a simple proof of it in the case of the most classical weakly asymmetric interacting particle system, namely, the wasep. We consider the process evolving on $\mathbb{Z}$ but we remark that all the results presented here are also true when considering  the process evolving on the one-dimensional torus.

\medskip
The outline of this paper is as follows. In Section \ref{s2} we give the statement of results, we  describe the model, we give the notion of energy solutions of the SBE equation and we introduce the density fluctuation field. In Section \ref{s3}, we characterize the limiting points of the sequence of density fields by means of a martingale problem. In Section \ref{s4} we study the non-linear term in the SBE, we state and prove the second order Boltzmann-Gibbs Principle and we generalize it to any local function of the dynamics.

\section{Statement of results}
\label{s2}
\subsection{The model}
\label{s2.1}
In this section we introduce the most classical weakly asymmetric interacting particle system, namely, the wasep. Its dynamics  can informally be described as follows. Fix  a parameter $a>0$. In the
one-dimensional lattice, we allow at most one particle per site and at each bond we associate an exponential clock. Clocks associate to different bonds are independent. When one of these clocks
ring, the occupation variables at the vertices of the bond are interchanged with a certain rate.  More precisely, a particle at  $x$ jumps to $x+1$ (resp. $x-1$) at rate  $p_n:=1/2 +a/2\sqrt{n}$ (resp. $q_n:=1-p_n$) if and only if the destination site is empty.
If the destination site is occupied, then nothing
happens and the clocks restart.
\begin{center}
\begin{figure}[h!]
\begin{tikzpicture}[scale=0.7]


\draw[step=1cm,black,very thin] (0,0) grid (16,0);

\foreach \y in {0,...,16}{
\draw[color=gray,very thin] (\y,0)--(\y,0.2);
}

\foreach \y in {1,...,2}{
\shade[ball color=SkyBlue] (\y-0.5,0.5) circle (.3);
}

\foreach \y in {4,...,4}{
\shade[ball color=SkyBlue] (\y-0.5,0.5) circle (.3);
}

\foreach \y in {6,...,6}{
\shade[ball color=SkyBlue] (\y-0.5,0.5) circle (.3);
}
\foreach \y in {9,...,10}{
\shade[ball color=SkyBlue] (\y-0.5,0.5) circle (.3);
}

\foreach \y in {11,...,11}{
\shade[ball color=SkyBlue] (\y-0.5,0.5) circle (.3);
}
\foreach \y in {13,...,13}{
\shade[ball color=SkyBlue] (\y-0.5,0.5) circle (.3);
}

\draw [->,color=black, line width=1] (6-0.5,1.3) arc (160:20:15pt);
\draw (6,1.7) node [color=black, above right] {{$\scriptstyle{1/2 +a/2\sqrt{n}}$}};

\draw [<-,color=black, line width=1] (5-0.5,1.3) arc (160:20:15pt);
\draw (5,1.7) node [color=black, above left] {$\scriptstyle{1/2 -a/2\sqrt{n}}$};
\end{tikzpicture}
\caption{The one-dimensional weakly asymmetric  simple exclusion process.}
\label{ssep}
\end{figure}
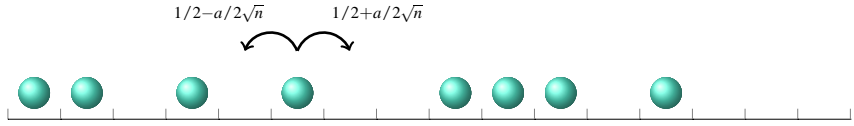
\end{center}

Formally, we denote by $\{\eta_t:=\eta_{tn^2} \,: \,t\ge 0\}$, the speeded-up, one-dimensional  weakly
asymmetric simple exclusion process with state space $\Omega = \{0,1\}^{\bb Z}$. The configurations of the state space are denoted
by the symbol $\eta$, so that $\eta(x)=1$ if the site $x$ is occupied and $\eta(x)=0$ if the site $x$ is empty. Its infinitesimal generator is denoted by $n^2\mathcal{L}_n$ where $\mathcal{L}_n$
acts on functions $f: \Omega\to\mathbb R$ as
\begin{equation*}
\mathcal{L}_n f(\eta) =\sum_{x \in \bb Z} \big\{p_n \eta(x)\big(1-\eta(x+1)\big) + q_n\eta(x+1)\big(1-\eta(x)\big) \big\} \nabla_{x,x+1} f(\eta),
\end{equation*}
where $\nabla_{x,x+1} f(\eta) = f(\eta^{x,x+1})- f(\eta)$ and for $x \in \bb Z$,
$\eta^{x,x+1}(y)=\eta(x+1)\textbf{1}_{y=x}+\eta(x)\textbf{1}_{y=x+1}+\eta(y)\textbf{1}_{y\neq x,x+1}.$

Let $\rho \in [0,1]$ and denote by $\nu_\rho$ the Bernoulli product measure on $\Omega$ with density $\rho$, which is defined as the unique measure in $\Omega$ such that the variables $\{\eta(x)\,:\, x \in \bb Z\}$ are independent and such that $\nu_\rho\{\eta(x)=1\}=\rho$ for any $x \in \bb Z$. The measures $\{\nu_\rho\,:\, \rho \in [0,1]\}$ are invariant, ergodic and reversible when $a= 0$, but for $a\neq 0$ they are no longer reversible.

We start by remarking some microscopic functions of the model that will be important in what follows. For that purpose, from now on up to the rest of this article, we fix a density $\rho \in (0,1)$ and a positive time $T$. We will consider the process $\eta_t$ with initial distribution $\nu_\rho$ and we denote by $E_\rho$ the expectation with respect to $\nu_\rho$. We denote by $\bb P_\rho$ the distribution of $\{\eta_t\,:\, t \in [0,T]\}$ in the space of c\`adl\`ag trajectories $\mc D([0,T]\,,\, \Omega)$ and we denote by $\bb E_\rho$ the expectation with respect to $\bb P_\rho$.

Fix $x\in\mathbb{Z}$. A simple computation shows that $\mathcal{L}_n(\eta(x))=j_{x-1,x}-j_{x,x+1}$, where for a bond $\{x,x+1\}$, $j_{x,x+1}$ denotes the instantaneous current  through that bond, that is, the difference between the jump rate from $x$ to $x+1$ and the jump rate from $x+1$ to $x$:
\begin{equation*}
j_{x,x+1}(\eta)=\Big(\frac{1}{2}+\frac{a}{2\sqrt{n}}\Big)\eta(x)(1-\eta(x+1))-\Big(\frac{1}{2}-\frac{a}{2\sqrt{n}}\Big)\eta(x+1)(1-\eta(x)).
\end{equation*}
We will use the following decomposition of the microscopic current
\begin{equation}
\begin{split}
j_{x,x+1}(\eta)&:=j_{x,x+1}^S(\eta)+\frac{1}{\sqrt n}j_{x,x+1}^{A}(\eta),\\
&=\frac{1}{2}(\eta(x)-\eta(x+1))+\frac{a}{2\sqrt n}(\eta(x)-\eta(x+1))^2.
\end{split}
\end{equation}
Notice that $j^S_{x,x+1}$ is written as the gradient of the function $D(\eta)=\frac{1}{2}\eta(x)$. This point will be important in what follows.

For a local function $f$, we denote by $\tilde{f}(\rho)$ its expectation with respect to $\nu_\rho$, that is,  $\tilde {f}(\rho)=\int f(\eta)\, d\nu_\rho$. Therefore,
$\tilde{j}^S(\rho)=0$ and  $\tilde{j}^A(\rho)= 2a\chi(\rho)\tilde D'(\rho)$, where
\begin{equation*}
\chi(\rho) = \rho(1-\rho) \quad \textrm{and} \quad \tilde{D}(\rho)=\frac{\rho}{2}
\end{equation*}
 are,  the static compressibility of the system and the diffusivity of the system, respectively.

\subsection{Stochastic Burgers equation}
Before introducing the Stochastic Burgers equation we need to set up some notation. Let  $\mc S(\bb R)$ be the Schwarz space of test functions and $\mc S'(\bb R)$ its  topological dual  with respect to inner product of $\mc L^2(\bb R)$. We denote by $\|F\|_2$ the $\mc L^2(\bb R)$-norm of a function $F:\mathbb{R}\to\mathbb{R}$, that is, $\|F\|^2_2=\int_\mathbb{R}F^2(x) \,dx$ . Fix $T>0$ and let $\mc C([0,T]\,,\,\mc S'(\bb R))$ be the space of continuous trajectories in $\mc S'(\bb R)$.

\begin{definition}\label{energy solution SBE}
We say that a stochastic  process $\{\mc Y_t\,:\, t \in [0,T]\}$ with trajectories in $\mc C([0,T]\,,\,\mc S'(\bb R))$ is an {\em energy solution} of the Stochastic Burgers equation
\begin{equation*}
\label{KPZ}
d \mc Y_t = A(\rho)\Delta \mc Y_t dt +B(\rho)\nabla \mc Y_t^2 dt + \sqrt{C(\rho)} \nabla \mc W_t
\end{equation*}
\begin{itemize}
\item[i)] \quad if there exists a constant $\kappa \in (0,\infty)$ such that for any $F \in \mc S(\bb R)$, any $\varepsilon >\varepsilon'>0$ and any $t \in [0,T]$ we have
\begin{equation*}
\label{energy.condition}
    \bb E\big[\big(\mc A_t^{\varepsilon}(F)-\mc A_t^{\varepsilon'}(F)\big)^2\big] \leq \kappa t \varepsilon \|\nabla F\|_2^2,
\end{equation*}
where
\begin{equation*}
 \mc A_t^\varepsilon(F):= \int_0^t \int_{\bb R} \mc Y_s(i_\varepsilon(x))^2 F'(x)\, dx \,ds
\end{equation*}
and for $x \in \bb R$, $i_\varepsilon(x,y) = \varepsilon^{-1}\mathbbm 1_{(x,x+\varepsilon]}(y)$.
 \item[ii)] \quad if the $\mc S'(\bb R)$-valued process $\{\mc A_t\,:\, t \in [0,T]\}$ defined as $\mc A_t(F): = \lim_{\varepsilon \to 0} \mc A_t^\varepsilon(F)$
 for $t \in [0,T]$ and $F \in \mc S(\bb R)$, has trajectories in $\mc C([0,T]\,,\, \mc S'(\bb R))$,
 \item[iii)]  \quad if for any function $F \in \mc S(\bb R)$, the process
\begin{equation}\label{martprobl}
\mc M_t(F)  = \mc Y_t(F) - \mc Y_0(F) - A(\rho) \int_0^t \mc Y_s(\Delta F)ds - \frac{1}{2}B(\rho) \mc A_t(F)
\end{equation}
is a continuous martingale of quadratic variation $C(\rho) t \|\nabla F\|_2^2$.
\end{itemize}
\end{definition}

\subsection{The density fluctuation field}

Recall that we have fixed a density $\rho \in (0,1)$ and we  consider the starting measure $\nu_\rho$. The density fluctuation field $\{\mc Y_t^n\, :\, t \in [0,T]\}$ is the linear functional defined on $\mc D([0,T]\,,\,\mc S'(\bb R))$  and given on $F\in\mathcal{S}(\mathbb{R})$ by
\begin{equation*}
\mc Y_t^n(F) = \frac{1}{\sqrt n} \sum_{x \in \bb Z}T_t F\Big(\frac{x}{n}\Big)(\eta_t(x)-\rho),
\end{equation*}
where $T_t F(x)=F(x - \sqrt n(\tilde{j}^A)'(\rho) t)$.
Now we explain why we remove the velocity $n^{3/2}(\tilde{j}^A)'(\rho) t=n^{3/2}(1-2\rho)at$ in the test function $F$. It is well known (see \cite{Gon} and references therein) that, at a first order, the density fluctuation fluctuation field of asymmetric systems are rigidly transported along the characteristics of the corresponding hydrodynamic equation. In order to see a non trivial temporal evolution, we should look at the density fluctuation field {\em around the characteristics} of the system. Therefore, we need to look at the density fluctuation field evolving in a time dependent reference frame as given above.

Now we are ready to state our main result. Roughly speaking, it says that, in the limit $n \to \infty$, the density fluctuation field is an energy solution of the stochastic Burgers equation.
\begin{theorem}
\label{t1}
The sequence of processes $\{\mc Y_t^n\, :\, t \in [0,T]\}_{n \in \bb N}$ is tight with respect to the Skorohod topology of $\mc D([0,T]\,,\,\mc S'(\bb R))$ and all its limit points are energy solutions of the SBE equation
\begin{equation}
\label{SBE}
d \mc Y_t = \tilde{D}'(\rho)\Delta \mc Y_t dt + \frac{1}{2}(\tilde{j}^{A})''(\rho)\nabla \mc Y_t^2 dt + \sqrt{2\chi(\rho) \tilde{D}'(\rho)} \nabla \mc W_t,
\end{equation}
where $\tilde{j}^A(\rho)= 2a\chi(\rho)\tilde D'(\rho)$.

\end{theorem}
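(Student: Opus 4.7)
The plan is to derive the martingale problem of Definition \ref{energy solution SBE} from Dynkin's formula applied to $\mathcal{Y}_t^n(F)$, and to show that every term in the resulting microscopic decomposition converges to its macroscopic counterpart in (\ref{martprobl}). For fixed $F\in \mathcal{S}(\mathbb{R})$,
\[
\mathcal{M}_t^n(F) := \mathcal{Y}_t^n(F) - \mathcal{Y}_0^n(F) - \int_0^t \bigl(\partial_s + n^2\mathcal{L}_n\bigr)\mathcal{Y}_s^n(F)\,ds
\]
is a martingale, and the task splits into (a) identifying the Laplacian drift, (b) identifying the nonlinear drift, (c) computing the quadratic variation, and (d) handling tightness plus the energy estimate.

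The integrand in the Dynkin decomposition is computed from $\mathcal{L}_n\eta(x) = j_{x-1,x}-j_{x,x+1}$ via a summation by parts, and then split using $j = j^S + n^{-1/2} j^A$. Since $j^S$ is a gradient, a second summation by parts converts it into the discrete Laplacian of $T_s F$ acting on $(\eta_s(x)-\rho)$, which converges to $\tilde{D}'(\rho)\mathcal{Y}_s(\Delta F)$. For the antisymmetric piece, expansion in $\bar{\eta}(x)=\eta(x)-\rho$ gives
\[
j^A_{x,x+1}(\eta) = \tilde{j}^A(\rho) + \tfrac{(\tilde{j}^A)'(\rho)}{2}\bigl(\bar{\eta}(x)+\bar{\eta}(x+1)\bigr) - a\,\bar{\eta}(x)\bar{\eta}(x+1).
\]
The constant telescopes away, and the linear piece cancels $\partial_s \mathcal{Y}_s^n(F)$ to leading order --- this is exactly the role of working in the moving frame $T_s F$. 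What survives is the nonlinear term $-a\sum_x\partial_n T_s F(x/n)\,\bar{\eta}_s(x)\bar{\eta}_s(x+1)$, which must be identified with $\tfrac{1}{2}(\tilde{j}^A)''(\rho)\mathcal{A}_t(F) = -a\,\mathcal{A}_t(F)$.

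This last identification is precisely the content of the second-order Boltzmann-Gibbs principle of Section \ref{s4}: the space-time integral of $\bar{\eta}(x)\bar{\eta}(x+1)$ is replaced by that of $(\eta^{\varepsilon n}(x)-\rho)^2$, where $\eta^{\varepsilon n}(x)$ is the empirical average on a block of size $\varepsilon n$, with a quantitative error of order $\varepsilon\|\nabla F\|_2^2$. This yields $\mathcal{A}_t^{\varepsilon}(F)$ in the limit, and the very same quantitative estimate supplies the energy bound (i); condition (ii) then follows from a Cauchy-in-$\varepsilon$ argument. For (c), the carr\'e du champ computation produces a quadratic variation whose expectation under $\nu_\rho$ equals $\chi(\rho)\,t\,\|\nabla F\|_2^2(1+o(1))$, matching $C(\rho) = 2\chi(\rho)\tilde{D}'(\rho) = \chi(\rho)$; concentration around this expectation follows from stationarity. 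Tightness is then handled via Mitoma's criterion, which reduces to tightness of $\mathcal{Y}_t^n(F)$ for each $F$, verified term-by-term by Aldous' criterion: the $L^2(\nu_\rho)$ stationary bound controls $\mathcal{Y}_0^n$ and $\mathcal{M}^n$, the gradient estimate controls the symmetric drift, and the same Boltzmann-Gibbs input controls the nonlinear one.

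The only non-routine step --- and the main obstacle --- is the second-order Boltzmann-Gibbs principle itself: it is what gives rigorous meaning to the formal product $\mathcal{Y}_s^2$ at the scale of the fluctuation field. Everything else reduces to standard summation-by-parts manipulations, Taylor expansion around $\rho$, and classical tightness arguments for $\mathcal{S}'(\mathbb{R})$-valued processes.
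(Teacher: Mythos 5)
Your proposal follows essentially the same route as the paper: Dynkin's formula for $\mc Y^n_t(F)$, summation by parts with the splitting $j=j^S+n^{-1/2}j^A$, cancellation of the linear transport term by the moving frame $T_sF$, identification of the surviving quadratic term via the second-order Boltzmann--Gibbs principle (Theorem \ref{2bg}, which the paper likewise proves separately in Section \ref{s4}), the same quadratic variation computation yielding $2\chi(\rho)\tilde D'(\rho)t\|\nabla F\|_2^2$, and tightness plus conditions (i)--(ii) deferred to standard arguments exactly as the paper defers them to \cite{GJ3}. The expansion of $j^A$ in powers of $\bar\eta$ and the resulting coefficients $\tilde j^A(\rho)$, $(\tilde j^A)'(\rho)/2$, $-a=\tfrac12(\tilde j^A)''(\rho)$ are correct and match the paper's algebra.
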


The proof of last theorem is classical in the literature and is done in two steps. First, we prove that the sequence is tight. Second, we characterize its limit points by means of a martingale problem. Given the uniqueness of solutions of the stochastic partial differential equation, convergence follows. For tightness issues we refer the interested reader to \cite{GJ3}. Up to now, uniqueness of the KPZ/SBE has been proved in the case where the solutions are defined on a torus \cite{Hai}. Since in our case the solutions are defined on $\mathbb{R}$ our result characterizes all the limit points of the sequence as energy solutions of the SBE equation. If one proves uniqueness of energy solutions, then convergence would follow.

\section{Characterization of limits points} \label{s3}
 In this section we prove that any limit point of the sequence $\{\mc Y_t^n\,:\, t \in [0,T]\}_{n \in \bb N}$  is an energy solution of the SBE equation \eqref{SBE}. According to Definition \ref{energy solution SBE} we have to check three points. In these notes we will give a simpler proof of iii) and we refer the reader to \cite{GJ3} for the remaining points. Recall that we want to characterize the limiting field $\mathcal{Y}_t$ as a solution of \eqref{SBE} by means of a martingale problem. For that purpose we fix a test function $F\in\mathcal{S}(\mathbb{R})$ and we use the Dynkin's formula which is a tool that provides a decomposition  of functions of a Markov process as a martingale plus a compensator. More precisely, we apply the Dynkin's formula in our setting to the density fluctuation field as follows. For each $n\in\mathbb{N}$,

 \begin{equation}\label{martingaledecomposition}
 \mathcal{M}_t^n(F)=\mc Y_t^n(F) - \mc Y_0^n(F) - \int_0^t (n^2\mc L_n+\partial_s) \mc Y_s^n(F) ds
 \end{equation}
is a martingale of quadratic variation given by
\begin{equation}\label{quadvar}
\int_0^t n^2\mc{L}_n  (\mc{Y}_s^n(F))^2 - 2 \mc {Y}_s^n(F) n^2\mc{L}_n \mc{Y}_s^n(F) ds.
\end{equation}
Now we look to the compensator in \eqref{martingaledecomposition}, that is, the integral term in that equation. Recall that $\mathcal{L}_n(\eta(x))=j_{x-1,x}-j_{x,x+1}$, therefore , a summation by parts gives
\begin{equation*}
\begin{split}
n^2\mc L_n(\mc Y_s^n(F))&=\frac{n}{\sqrt n}\sum_{x\in\mathbb{Z}} \nabla_n T_s F\Big(\frac{x}{n}\Big) j_{x,x+1}(\eta_s)\\
&=\sqrt n\sum_{x\in\mathbb{Z}} \nabla_n T_s F\Big(\frac{x}{n}\Big) j^S_{x,x+1}(\eta_s)
+\sum_{x\in\mathbb{Z}} \nabla_n T_s F\Big(\frac{x}{n}\Big) j^A_{x,x+1}(\eta_s),
 \end{split}
 \end{equation*}
where $\nabla_n T_sF(x) = n (T_sF(x+1) - T_sF(x)).$

Since $j_{x,x+1}^S(\eta)$ is written as the gradient of the function $\frac{1}{2}(\eta(x)-\rho)$, the term on the left hand side of last expression can be rewritten as
\begin{equation*}
\frac{1}{2\sqrt n}\sum_{x\in\mathbb{Z}} \Delta_n T_s F\Big(\frac{x}{n}\Big)(\eta_s(x)-\rho),
 \end{equation*}
where  $\Delta_n T_sF (x) = n^2(T_sF(x+1)+ T_sF(x-1) -2 T_sF(x)).$

Since $j^A_{x,x+1}(\eta)=\frac{a}{2}(\eta(x)-\eta(x+1))^2=\frac{a}{2}(\eta(x)-2\eta(x)\eta(x+1)+\eta(x+1))$ we can write the remaining term as
\begin{equation*}
\begin{split}
a\sum_{x\in\mathbb{Z}} \nabla_n T_s F\Big(\frac{x}{n}\Big) \Big
(\frac{1}{2}\eta_s(x)-\eta_s(x)\eta_s(x+1)+\frac{1}{2}\eta_s(x+1)\Big).
 \end{split}
 \end{equation*}
On the other hand, $\partial_s \mc Y_s^n(F)$ equals to
\begin{equation*}
-a\sum_{x\in\mathbb{Z}}\partial_x T_s F\Big(\frac{x}{n}\Big)(1-2\rho)(\eta_s(x)-\rho).
 \end{equation*}
The sum of the two last terms can be written as
\begin{equation*}
\begin{split}
&a\sum_{x\in\mathbb{Z}} \nabla_n T_s F\Big(\frac{x}{n}\Big) \Big(\frac{\eta_s(x)}{2}-\eta_s(x)\eta_s(x+1)+\frac{\eta_s(x+1)}{2}-(1-2\rho)(\eta_s(x)-\rho)\Big)\\
+&a\sum_{x\in\mathbb{Z}}\Big(\nabla_n T_s F\Big(\frac{x}{n}\Big)-\partial_x T_s F\Big(\frac{x}{n}\Big)\Big)(1-2\rho)(\eta_s(x)-\rho).
 \end{split}
 \end{equation*}

 By the Taylor expansion of $T_sF$, a simple computation shows that the $L^2(\mathbb{P}_\rho)$-norm of the time integral of last term vanishes, as $n\to\infty$.
Now we look at the first term in the previous expression. Since
\begin{equation*}
\begin{split}
&\frac{\eta_s(x)}{2}-\eta_s(x)\eta_s(x+1)+\frac{\eta_s(x+1)}{2}-(1-2\rho)(\eta_s(x)-\rho)\\
=&-(\eta_s(s)-\rho)(\eta_s(x+1)-\rho)+\Big(\rho-\frac{1}{2}\Big)(\eta(x)-\eta(x+1)),
 \end{split}
 \end{equation*}
that term can be written as
\begin{equation*}
\begin{split}
-&a\sum_{x\in\mathbb{Z}} \nabla_n T_s F\Big(\frac{x}{n}\Big)(\eta_s(s)-\rho)(\eta_s(x+1)-\rho)\\
+&\frac{a}{n}\sum_{x\in\mathbb{Z}}\Delta_n T_s F\Big(\frac{x}{n}\Big)\Big(\rho-\frac{1}{2}\Big)(\eta(x)-\rho).
 \end{split}
 \end{equation*}
Notice that in all the expressions above we can introduce constants since the terms $\nabla_n T_sF$ and $\Delta_n T_sF$ add up to zero.

A simple computation shows that the $L^2(\mathbb{P}_\rho)$-norm of the time integral of the second term on the previous expression vanishes, as $n\to\infty$.
Therefore, we can write the integral part of the martingale in \eqref{martingaledecomposition} as the sum of the following terms
\begin{equation}
\label{campo I}
\mc I_t^n(F) = \int_0^t \frac{1}{2\sqrt n}\sum_{x\in\mathbb{Z}} \Delta_n T_s F\Big(\frac{x}{n}\Big)(\eta_s(x)-\rho) ds,
\end{equation}
\begin{equation}
\label{campo B}
\mc B_t^n(F) = -a\int_0^t\sum_{x\in\mathbb{Z}} \nabla_n T_s F\Big(\frac{x}{n}\Big)(\eta_s(s)-\rho)(\eta_s(x+1)-\rho) ds,
\end{equation}
plus some term $\mc R_t^n(F)$ whose $L^2(\mathbb{P}_\rho)$-norm vanishes, as $n\to\infty$.
Notice that
\begin{equation*}
\mc I_t^n(F) = \frac{1}{2} \int_0^t \mathcal{Y}^n_t(\Delta_n  F)\,ds,
\end{equation*}
and when $n\to\infty$, this term will converge to the third term on the right hand side of \eqref{martprobl}.

Now it remains to analyze the term $\mc B_t^n(F)$. This is the most difficult task and we postpone it to the next section.

Finally, the quadratic variation  of the martingale in \eqref{quadvar} can be written as
\begin{equation*}
\int_0^t\frac{1}{2n}\sum_{x\in\mathbb{Z}}\Big(\nabla_n T_s F\Big(\frac{x}{n}\Big)\Big)^2\Big((\eta_s(x)-\eta_s(x+1))^2+\frac{a}{\sqrt n}(\eta_s(x)-\eta_s(x+1))\Big) ds.
\end{equation*}
As a consequence,
\begin{equation*}
\mathbb{E}_{\rho}\Big[\Big(\mathcal{M}^n_t(F)\Big)^2\Big]\leq \int_0^t\frac{1}{2n}\sum_{x\in\mathbb{Z}}\Big(\nabla_n T_s F\Big(\frac{x}{n}\Big)\Big)^22\chi(\rho)\,ds+O\Big(\frac{1}{\sqrt n}\Big),
\end{equation*}
and $\lim_{n\to\infty}\mathbb{E}_{\rho}[(\mathcal{M}^n_t(F))^2]= t\chi(\rho)\|\nabla F\|_2^2.$ Notice that the limit of the quadratic variation is also equal to $2\chi(\rho) \tilde{D}'(\rho)\|\nabla F\|_2^2$, which matches with the strength of the noise in \eqref{SBE}.

\begin{remark}
Suppose that we are looking at a weakly asymmetric dynamics such that its symmetric part of the current $j^S_{x,x+1}$ is written as the gradient of a function $D(\eta)$ which does not allow to immediately write the corresponding term in the martingale decomposition as a function of the density field, as happens here for the term $\mathcal{I}_t^n(F)$. Then, we cannot close the term as we did above. A way to overcome this problem is to apply the classical Boltzmann-Gibbs principle, see Section \ref{s4}, which allows to do that as long as the function $D(\eta)$ is local. In that case, we would be able to write that term in the martingale decomposition as $\tilde {D}'(\rho) \int_0^t \mathcal{Y}^n_t(\Delta_n  F)\,ds$, which is exactly what we have above for the dynamics we consider here.
\end{remark}
\section{The non-linear term in SBE}\label{s4}

The main goal of this section is to analyze the term $\mc B_t^n(F)$ given in \eqref{campo B}. Looking to that field one can see that the integrand function is not written in terms of the density fluctuation field in a closed form, as happens for the term $\mathcal{I}^n_t(F)$. Therefore, we need to replace it by some function of the density field. This kind of replacement is known in the literature as the Boltzmann-Gibbs Principle. This Principle was first introduced by \cite{BR} and it says that, for any local function $f:\Omega\to\mathbb{R}$, $h:\mathbb{R}\to\mathbb{R}$ of compact support and for any $t>0$,
 \begin{equation*}
\lim_{n\rightarrow\infty}\mathbb{E}_{\rho}\Big[\int_{0}^{t}\!\!\frac{1}{\sqrt n}
\sum_{x\in{\mathbb{Z}}}h\Big(\frac{x}{n}\Big)\Big\{\tau_xf({\eta}_{s})-\tilde{f}(\rho)-\tilde{f}'(\rho)(\eta_s(x)-\rho)\Big\}\, ds\Big]^2=0.
\end{equation*}
We notice that in our case  $\tau_xf(\eta)=(\eta(x)-\rho)(\eta(x+1)-\rho)$, so that $\tilde{f}'(\rho)=0$ and the previous does give any useful information about the limit of the term $\mc B_t^n(F)$. Moreover, in $\mc B_t^n(F)$ we do not have the factor $1/\sqrt n$ in front of the sum. 
Nevertheless, the following result is the key point in order to write the term $\mc B_t^n(F)$  as a quadratic function of the field $\mathcal{Y}^n_t$. Basically it says that, when time averaged with $h$,  we can replace the function $(\eta(x)-\rho)(\eta(x+1)-\rho)$ by its conditional expectation in a box of size $\epsilon n$.

\begin{theorem}{(Second-order Boltzmann-Gibbs Principle)} \label{2bg}

For every $t>0$ and any measurable function $h:\mathbb{Z}\times[0,T]\to\mathbb{R}$,
\begin{equation*}
\lim_{\varepsilon\rightarrow 0}\lim_{n\rightarrow\infty}\mathbb{E}_{\rho}\Big[\int_{0}^{t}\!\!
\sum_{x\in{\mathbb{Z}}}h_s\Big(\frac{x}{n}\Big)\Big\{\bar{\eta}_{s}(x)\bar{\eta}_{s}(x+1)-E_\rho\Big[\bar{\eta}_s(x)\bar{\eta}_s(x+1)\Big|\eta^{\varepsilon n}_s(x)\Big]\Big\}\, ds\Big]^2=0,
\end{equation*}
where  $\bar{\eta}(x)=\eta(x)-\rho$ and $$\eta^{\varepsilon n}_s(x)=\frac{1}{\epsilon n}\sum_{y=x}^{x+\varepsilon n-1}\eta(y).$$

\end{theorem}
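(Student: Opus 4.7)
The plan is to use a dyadic (doubling) multiscale replacement, interpolating between $\bar\eta(x)\bar\eta(x+1)$ and $g_{\varepsilon n}(\eta,x):=E_\rho[\bar\eta(x)\bar\eta(x+1)\mid\eta^{\varepsilon n}(x)]$ through a chain of scales $\ell_k=2^k$, and estimating each step via the Kipnis-Varadhan variational inequality. A direct computation under the Bernoulli product measure conditioned on $\eta^\ell(x)$ shows that $g_\ell(\eta,x)$ is a quadratic polynomial in the block average $\eta^\ell(x)$, and crucially that $g_2(\eta,x)=\bar\eta(x)\bar\eta(x+1)$, since the product is already measurable with respect to $\eta(x)+\eta(x+1)$. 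Choosing $K$ with $2^K\approx\varepsilon n$, I would telescope
\[
\bar\eta(x)\bar\eta(x+1)-g_{\varepsilon n}(\eta,x)=\sum_{k=1}^{K-1}V_{k,x}(\eta)+\text{(boundary correction)},\qquad V_{k,x}:=g_{2^k}(\cdot,x)-g_{2^{k+1}}(\cdot,x),
\]
so that each $V_{k,x}$ is supported in a block of $2^{k+1}$ consecutive sites starting at $x$ and has conditional mean zero given $\eta^{2^{k+1}}(x)$ by the tower property.

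Next I would apply Kipnis-Varadhan at each scale. Because WASEP is non-reversible, I would use the sector condition, which applies because the antisymmetric part of the generator is $O(1/\sqrt n)$ relative to the symmetric Dirichlet form. This yields
\[
\mathbb{E}_\rho\Big[\Big(\int_0^t V_k(\eta_s,s)\,ds\Big)^2\Big]\le C\,t\sup_{s\le t}\|V_k(\cdot,s)\|_{-1,n}^2,\qquad V_k(\eta,s):=\sum_{x\in\mathbb{Z}}h_s(x/n)\,V_{k,x}(\eta),
\]
where $\|\cdot\|_{-1,n}$ denotes the $H_{-1}$ norm associated to $n^2$ times the symmetric part of $\mathcal{L}_n$.

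The central estimate, and what I expect to be the main obstacle, is the per-scale bound on $\|V_k(\cdot,s)\|_{-1,n}^2$. For a single $V_{k,x}$, the explicit polynomial form of $g_\ell$ yields $\|V_{k,x}\|_{L^2(\nu_\rho)}^2\lesssim 4^{-k}$, and the Poincaré inequality for SSEP on a block of size $2^{k+1}$ (spectral gap of order $4^{-k}$) then gives $\|V_{k,x}\|_{-1,n}^2\lesssim 1/n^2$. To handle the sum over $x$ I would partition $\mathbb{Z}$ into $2^{k+1}$ residue classes modulo $2^{k+1}$, so that within each class the supports $[x,x+2^{k+1}-1]$ are pairwise disjoint; combining a Dirichlet-form decoupling within each class (built from test functions supported on the individual blocks) with a Cauchy-Schwarz across the $2^{k+1}$ classes produces
\[
\|V_k(\cdot,s)\|_{-1,n}^2\le C\,\frac{2^k}{n}\,\|h_s\|_2^2.
\]

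Finally I would reassemble the scales by the triangle inequality for the $H_{-1}$ norm, rather than by a Cauchy-Schwarz on the telescope that would introduce a spurious $\log n$ factor: since $\|V_k(\cdot,s)\|_{-1,n}\le C\sqrt{2^k/n}\,\|h_s\|_2$, the geometric series is dominated by its largest term, so
\[
\Big\|\sum_{k=1}^{K-1}V_k(\cdot,s)\Big\|_{-1,n}\le C\|h_s\|_2\sum_{k=1}^{K-1}\sqrt{2^k/n}\le C'\sqrt\varepsilon\,\|h_s\|_2.
\]
Squaring and plugging into the Kipnis-Varadhan bound above yields a total estimate of order $t\,\varepsilon\,\sup_{s\le t}\|h_s\|_2^2$, which vanishes under the iterated limit $\lim_{\varepsilon\to 0}\lim_{n\to\infty}$. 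The boundary correction going from $2^K$ to $\varepsilon n$ is controlled by a single-scale estimate of precisely the same type, so it contributes at the same $O(\sqrt\varepsilon)$ order. The delicate points are thus the sector-condition reduction to the symmetric Dirichlet form and the correct summation-over-$x$ bookkeeping that makes the residue-class decoupling compatible with the non-reversibility of the dynamics.
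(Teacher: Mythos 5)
Your proposal is correct and follows essentially the same route as the paper: a dyadic renormalization through conditional expectations on doubling blocks, with each step controlled by the Kipnis--Varadhan variational bound, the spectral gap of order $\ell^{-2}$ on a block of size $\ell$, and the variance decay $\Var_\rho[\tilde V_{\ell,2\ell}]\le C\ell^{-2}$, so that the geometric series over scales is dominated by its top term and yields the final bound of order $t\varepsilon\|h\|_2^2$. The only differences are cosmetic: your observation that conditioning on a block of size $2$ reproduces $\bar\eta(x)\bar\eta(x+1)$ exactly lets you bypass the paper's one-block estimate, your residue-class decoupling replaces the paper's simpler overlap count $\sum_x\langle g,-\mc S_{x,\ell}\,g\rangle_\rho\le\ell\,\langle g,-\mc L_n g\rangle_\rho$, and the sector condition you invoke is not actually needed, since the Kipnis--Varadhan bound with the $H_{-1}$ norm of the symmetric Dirichlet form holds for general stationary Markov processes, which is what the paper uses via Proposition A1.6.1 of \cite{KL}.
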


By the previous result, we can write
\begin{equation*}
\mc B_t^n(F) = -a\int_0^t\sum_{x\in\mathbb{Z}} \nabla_n T_s F\Big(\frac{x}{n}\Big)E_\rho\Big[\bar{\eta}_s(x)\bar{\eta}_s(x+1)\Big|\eta^{\epsilon n}_s(x)\Big] ds,
\end{equation*}
plus some error that vanishes, sending $n\rightarrow+\infty$ and then $\epsilon\rightarrow 0$.
At this point  we compute the conditional expectation appearing above. Notice that the projection of $\nu_\rho$ over the space of configurations with a fixed number of particles in the box $\{x,x+1,\cdots,\varepsilon n-1\}$ is the uniforme measure \cite{KL}, so that
\begin{equation*}
E_\rho[\eta(x)\eta(x+1)|\eta^{\varepsilon n}(x)=k]=\frac{C^{\varepsilon n-2}_{k-2}}{C^{\varepsilon n}_{k}}=\frac{k(k-1)}{\varepsilon n (\varepsilon n-1)},
\end{equation*}
and by a simple computation we get
\begin{equation}\label{condexp}
E_\rho[\bar\eta(x)\bar\eta(x+1)|\eta^{\varepsilon n}(x)]=(\bar\eta^{\varepsilon n}(x))^2-\frac{1}{\varepsilon n-1}\chi(\eta^{\varepsilon n}(x)).
\end{equation}
Since $\sum_{x\in\mathbb{Z}} \nabla_n T_s F\Big(\frac{x}{n}\Big)$ is equal to zero, we can introduce constants in the summation above and since
$(\bar\eta_s^{\varepsilon n}(x))^2=\frac{1}{n}\Big(\mathcal{Y}_s^n(\iota_\varepsilon(x/n))\Big)^2$ we write
\begin{equation}
\begin{split}
\mc B_t^n(F) =& -a\int_0^t\frac{1}{n}\sum_{x\in\mathbb{Z}} \nabla_n T_s F\Big(\frac{x}{n}\Big)\Big(\mathcal{Y}_s^n(\iota_\varepsilon(x/n))\Big)^2 \,ds\\
&+a\int_0^t\sum_{x\in\mathbb{Z}}\nabla_n T_s F\Big(\frac{x}{n}\Big)\frac{1}{\varepsilon n-1}\Big\{\chi(\eta_s^{\varepsilon n}(x))-\chi(\rho)\Big\} \, ds.
\end{split}
\end{equation}
Now, by the Cauchy-Schwarz's inequality we have
\begin{equation}
\begin{split}
&\mathbb{E}_\rho\Big[\int_0^t\sum_{x\in\mathbb{Z}}\nabla_n T_s F\Big(\frac{x}{n}\Big)\frac{1}{\varepsilon n-1}\Big\{\chi(\eta_s^{\varepsilon n}(x))-\chi(\rho)\Big\} \, ds\Big]^2\\&\leq t^2\sum_{x,y\in\mathbb{Z}}\nabla_n T_s F\Big(\frac{x}{n}\Big)\nabla_n T_s F\Big(\frac{x}{n}\Big)\frac{1}{\varepsilon^2 n^2}\Big\{\chi(\eta_s^{\varepsilon n}(x))-\chi(\rho)\Big\}\Big\{\chi(\eta_s^{\varepsilon n}(y))-\chi(\rho)\Big\}.
\end{split}
\end{equation}
Since the functions above correlate for $x$ and $y$ at a distance at most $\varepsilon n$, we can bound the previous expression by
\begin{equation}
\begin{split}
& C\frac{t^2}{\varepsilon n}\sum_{x\in\mathbb{Z}}\Big(\nabla_n T_s F\Big(\frac{x}{n}\Big)\Big)^2E_{\rho}\Big[\chi(\eta_s^{\varepsilon n}(x))-\chi(\rho)\Big]^2\\
&\leq C\frac{t^2}{\varepsilon^2 n}\|\nabla F\|_2^2,
\end{split}
\end{equation}which vanishes, as $n\to\infty$.

\subsection{Proof of the second-order Boltzmann-Gibbs Principle}

The proof of Theorem \ref{2bg} is divided into two steps. For notational convenience we consider a function $h$ that does not depend on time. We start by showing that we can replace the local function  $\bar{\eta}_{s}(x)\bar{\eta}_{s}(x+1)$ by its conditional expectation on a box of size $\ell$.

\begin{lemma} \label{oneblock}(One-block estimate)

For every $t>0$, $\ell\geq 2$ and any measurable function $h:\mathbb{Z}\to\mathbb{R}$,
\begin{equation*}
\mathbb{E}_{\rho}\Big[\int_{0}^{t}\sum_{x\in\mathbb{Z}}h(x)
V_{\ell}(\eta_{s})ds\Big]^{2}\leq Ct\frac{\ell^3}{n^2}\sum_{x\in\mathbb{Z}}h^2(x),
\end{equation*}
where $V_{\ell}(\eta_s)=\bar{\eta}_{s}(x)\bar{\eta}_{s}(x+1)-E_\rho\Big[\bar{\eta}_s(x)\bar{\eta}_s(x+1)\Big|\eta_s^\ell(x)\Big].$
\end{lemma}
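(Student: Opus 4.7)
The plan is to combine the Kipnis--Varadhan inequality in its non-reversible form with the spectral gap of symmetric simple exclusion on a finite box. Decompose the rescaled generator as $n^{2}\mathcal{L}_{n}=n^{2}S_{n}+n^{3/2}a\,A_{n}$, where $S_{n}$ generates the symmetric simple exclusion with jump rate $1/2$ on each bond and $A_{n}$ is anti-symmetric in $L^{2}(\nu_{\rho})$; this is legitimate because $\nu_{\rho}$ is invariant (though not reversible) for the WASEP dynamics. The Kipnis--Varadhan bound for non-reversible Markov processes (see e.g.\ Appendix 1.6 of \cite{KL}) then gives
\[
\mathbb{E}_{\rho}\Big[\Big(\int_{0}^{t}V(\eta_{s})\,ds\Big)^{2}\Big]\le C\,t\,\|V\|_{-1,n^{2}S_{n}}^{2},\qquad \|V\|_{-1,n^{2}S_{n}}^{2}:=\sup_{f}\bigl\{2\langle V,f\rangle_{\rho}-n^{2}\langle f,-S_{n}f\rangle_{\rho}\bigr\},
\]
for every mean-zero $V\in L^{2}(\nu_{\rho})$. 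I apply this with $V(\eta):=\sum_{x\in\mathbb{Z}}h(x)\,V_{\ell}^{(x)}(\eta)$, where $V_{\ell}^{(x)}(\eta):=\bar\eta(x)\bar\eta(x+1)-E_{\rho}[\bar\eta(x)\bar\eta(x+1)\mid\eta^{\ell}(x)]$; by construction this satisfies the stronger identity $E_{\rho}[V_{\ell}^{(x)}\mid\eta^{\ell}(x)]=0$.

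For any test function $f$, this conditional orthogonality permits replacing $f$ by $f-E_{\rho}[f\mid\eta^{\ell}(x)]$ inside $\langle V,f\rangle_{\rho}$. Since $|V_{\ell}^{(x)}|\le 2$, Cauchy--Schwarz yields
\[
\bigl|E_{\rho}\bigl[V_{\ell}^{(x)}(f-E_{\rho}[f\mid\eta^{\ell}(x)])\bigr]\bigr|\le 2\bigl(E_{\rho}[\Var(f\mid\eta^{\ell}(x))]\bigr)^{1/2}.
\]
Under $\nu_{\rho}$, the conditional law given $\eta^{\ell}(x)$ is the uniform measure on configurations in $\{x,\dots,x+\ell-1\}$ with the prescribed number of particles, and on this hyperplane the spectral gap of SSEP is of order $\ell^{-2}$. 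The corresponding Poincar\'e inequality delivers
\[
E_{\rho}[\Var(f\mid\eta^{\ell}(x))]\le C\,\ell^{2}\,D_{\ell}^{(x)}(f),\qquad D_{\ell}^{(x)}(f):=\sum_{y=x}^{x+\ell-2}E_{\rho}\bigl[(f(\eta^{y,y+1})-f(\eta))^{2}\bigr].
\]

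Summing over $x$ with weights $h(x)$, applying Cauchy--Schwarz in the summation index, and using that each bond belongs to at most $\ell-1$ of the boxes $[x,x+\ell-1]$, I obtain $\sum_{x}D_{\ell}^{(x)}(f)\le \ell\,D_{n}(f)$ with $D_{n}(f):=\sum_{y}E_{\rho}[(f(\eta^{y,y+1})-f(\eta))^{2}]=4\langle f,-S_{n}f\rangle_{\rho}$. Young's inequality with parameter tuned to the $n^{2}$ weight on the Dirichlet term then produces
\[
2|\langle V,f\rangle_{\rho}|\le 2C\,\ell^{3/2}\Big(\sum_{x}h(x)^{2}\Big)^{1/2}\sqrt{D_{n}(f)}\le \frac{C'\,\ell^{3}}{n^{2}}\sum_{x}h(x)^{2}+n^{2}\langle f,-S_{n}f\rangle_{\rho}.
\]
Taking the supremum in $f$ gives $\|V\|_{-1,n^{2}S_{n}}^{2}\le C'\ell^{3}n^{-2}\sum_{x}h(x)^{2}$, and inserting this into the Kipnis--Varadhan bound completes the proof. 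The main obstacle is the lack of reversibility: one must invoke the version of Kipnis--Varadhan valid beyond the symmetric setting so that the asymmetric drift $n^{3/2}aA_{n}$ does not contaminate the $H_{-1}$ estimate. Granting this now-classical tool, the rest is a routine combination of conditional duality and the one-box spectral gap, in which the factor $\ell^{2}$ comes from the spectral gap, one factor of $\ell$ comes from the overlap of the boxes in the summation over $x$, and the factor $n^{-2}$ is the price paid in Young's inequality to absorb the Dirichlet form.
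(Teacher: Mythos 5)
Your proof is correct and follows essentially the same route as the paper: the Kipnis--Varadhan $H_{-1}$ bound with the Dirichlet form of the symmetric part, localization to boxes of size $\ell$ with the overlap factor $\ell$, the SSEP spectral gap of order $\ell^{-2}$ contributing the $\ell^{2}$, and Young's inequality against $n^{2}\langle f,-\mathcal S_{n}f\rangle_{\rho}$ yielding the $n^{-2}$. The only (cosmetic) difference is that you phrase the spectral-gap step as a Poincar\'e inequality for the conditional variance of the test function $f$, whereas the paper applies it dually as $\langle V_{\ell},(-\mathcal S_{x,\ell})^{-1}V_{\ell}\rangle_{\rho}\leq \ell^{2}\Var_{\rho}[V_{\ell}]$ together with $\Var_{\rho}[V_{\ell}]\leq C$.
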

\begin{proof}
Before giving the proof we introduce some notation. For two functions $f,g$ in ${L^{2}(\nu_{\rho})}$ we define the inner product $<f,-\mathcal{L}_ng>_{\rho}=-\int f(\eta)\mathcal{L}_ng(\eta)\, d\nu_{\rho}$. Let ${H}_{1}$ be the Hilbert space generated by
$L^{2}(\nu_{\rho})$ and this inner product. Denote by $\|\cdot\|_{1}$ the norm induced by this inner product and let $\|\cdot\|_{-1}$ be its
dual norm with respect to $L^{2}(\nu_{\rho})$:
\begin{equation} \label{variationalformH-1norm}
\|f\|_{-1}^2=\sup_{g\in{L^{2}(\nu_{\rho})}}\Big\{2\int f(\eta)g(\eta)\, d\nu_{\rho}-\|g\|_{1}^2\Big\}.
\end{equation}
Immediately we see that for every $f\in{H_{-1}}$, $g\in{L^{2}(\nu_{\rho})}$ and $A>0$
\begin{equation} \label{norm bound}
2\int f(\eta)g(\eta)\, d\nu_{\rho}\leq{\frac{1}{A}\|f\|^2_{-1}+A\|g\|_{1}^2}.
\end{equation}
By Proposition A1.6.1 of \cite{KL}, the   expectation in the statement of the lemma is bounded from above by
\begin{equation*}\label{KV bound}
Ct\Big\|\sum_{x\in\mathbb{Z}}h(x)V_{\ell}\Big\|_{-1}^{2},
\end{equation*}
where $C$ is a constant.

 By the variational formula for the $H_{-1}$-norm (\ref{variationalformH-1norm}) the previous expression  is equal to
\begin{equation*}
Ct \sup_{g \in L^{2}(\nu_{\rho})}\Big\{2\sum_{x\in\mathbb{Z}}h(x) \int V_{\ell}(\eta)g(\eta)\, d\nu_{\rho}-n^2<g,-\mathcal{L}_{n}g>_{\rho}\Big\}.
\end{equation*}
Now we bound $\int V_{\ell}(\eta)g(\eta)\, d\nu_{\rho}.$ We notice that by a simple computation one can prove that the adjoint of $\mc L_n$, denoted by $\mc L_n^*$ is given on
functions $f: \Omega\to\mathbb R$ by
\begin{equation*}
 \mathcal{L}_n^* f(\eta) = \sum_{x \in \bb Z} \big\{q_n \eta(x)\big(1-\eta(x+1)\big) + p_n\eta(x+1)\big(1-\eta(x)\big) \big\} \nabla_{x,x+1} f(\eta).
\end{equation*}
We denote the symmetric part of the infinitesimal generator $\mathcal L_n$ by $\mc S_n$, which is given by
\begin{equation*}
\mc S_n:=\frac{\mc L_n+\mc L_n^*}{2}.
\end{equation*}
First, we notice that for any $g\in L^2(\nu_\rho)$ it holds that $<g,-\mathcal{L}_{n}g>_{\rho}=<g,-\mathcal{S}_{n}g>_{\rho}$.
Since $(p_n+q_n) \eta(x)\big(1-\eta(x+1)\big) + (p_n+q_n)\eta(x+1)\big(1-\eta(x)\big)=\eta(x)+\eta(x+1)-2\eta(x)\eta(x+1)$, we obtain that
 \begin{equation*}
 \mathcal{S}_n f(\eta) = \sum_{x \in \bb Z} \Big\{\frac{1}{2} \eta(x)\big(1-\eta(x+1)\big) + \frac{1}{2}\eta(x+1)\big(1-\eta(x)\big) \Big\} \nabla_{x,x+1} f(\eta),
\end{equation*}
which is the infinitesimal generator of the symmetric simple exclusion process.

Before proceeding we compute $ <g,-\mathcal{S}_{n}g>_{\rho}$. By definition it equals to

\begin{equation*}
\begin{split}
 -\sum_{x\in\mathbb{Z}}\int g(\eta)\Big\{\frac{1}{2} \eta(x)\big(1-\eta(x+1)\big) + \frac{1}{2}\eta(x+1)\big(1-\eta(x)\big) \Big\} \nabla_{x,x+1} g(\eta)d\nu_\rho.
 \end{split}
\end{equation*}
Now we write it as twice its half and for each $x\in\mathbb{Z}$ and in one of the parcels we make the exchange $\eta$ into $\eta^{x,x+1}$ (for which the measure $\nu_\rho$ is invariant) to obtain
\begin{equation*}
\begin{split}
 &-\frac{1}{2}\sum_{x\in\mathbb{Z}}\int g(\eta)\Big\{\frac{1}{2} \eta(x)\big(1-\eta(x+1)\big) + \frac{1}{2}\eta(x+1)\big(1-\eta(x)\big) \Big\} \nabla_{x,x+1} g(\eta)d\nu_\rho\\
 +&\frac{1}{2}\sum_{x\in\mathbb{Z}}\int g(\eta^{x,x+1})\Big\{\frac{1}{2} \eta(x+1)\big(1-\eta(x)\big) + \frac{1}{2}\eta(x)\big(1-\eta(x+1)\big) \Big\} \nabla_{x,x+1} g(\eta)d\nu_\rho.
\end{split}
\end{equation*}
Now we organize the terms and we get
\begin{equation*}
<g,-\mathcal{S}_{n}g>_{\rho}=\frac{1 }{4}\sum_{x\in\mathbb{Z}}\int \Big( \eta(x)-\eta(x+1)\Big)^2 \Big(\nabla_{x,x+1} g(\eta)\Big)^2d\nu_\rho.
\end{equation*}
For simplicity we write $<g,-\mathcal{S}_{n}g>_{\rho}$ as $\sum_{x\in\mathbb{Z}}I_{x,x+1}(g)$, where
\begin{equation*}
I_{x,x+1}(g)=\frac{1 }{4}\int \Big( \eta(x)-\eta(x+1)\Big)^2 \Big(\nabla_{x,x+1} g(\eta)\Big)^2d\nu_\rho.
\end{equation*}

Now for fixed $x\in\mathbb{Z}$ and $\ell\geq 2$, let $\mathcal{S}_{x,\ell}$ be the restriction of $\mathcal S_n$ to the set  $\{x,x+1,\cdots,x+\ell-1\}$. Since $E_\rho[V_{\ell}|\eta^\ell(x)]=0$, then $V_{\ell}$ belongs to the image of the generator $\mathcal{S}_{x,\ell}$.
 Therefore, by (\ref{norm bound}) for each $x\in\mathbb{Z}$ and
$A_x$ a positive constant it holds that
\begin{equation*}
\int V_{\ell}(\eta)g(\eta)\, d\nu_{\rho}\leq{\frac{1}{2A_{x}}<V_{\ell},(-\mc{S}_{x,\ell})^{-1}V_{\ell}>_{\rho}+\frac{A_{x}}{2}<g,-\mc{S}_{x,\ell}g>_{\rho}}.
\end{equation*}

Now, we notice that by translation invariance it holds that
\begin{equation*}
\sum_{x\in\mathbb{Z}}<g,-\mc{S}_{x,\ell}g>_{\rho}=\sum_{x\in\mathbb{Z}}\sum_{y=x+1}^{x+\ell}I_{y,y+1}(g)
\leq \sum_{x\in\mathbb{Z}}\ell I_{x,x+1}(g) = \ell<g,-\mathcal{L}_{n}g>_{\rho}.
\end{equation*}

Therefore, taking for each $x$, $A_{x}=n^{2}(h(x))^{-1}\ell^{-1}$, the expectation in the statement of the lemma becomes bounded by
\begin{equation}\label{eqvar}
Ct\sum_{x\in\mathbb{Z}}h^2(x)\frac{\ell}{n^2}<V_{\ell},(-\mc{S}_{x,\ell})^{-1}V_{\ell}>_{\rho}.
\end{equation}
By the spectral gap inequality \cite{Qua} we have that $<V_{\ell},(-\mc{S}_{x,\ell})^{-1}V_{\ell}>_{\rho}\leq \ell^2Var_\rho[V_\ell]$, where $Var_\rho[V_\ell]$ denotes the variance of the function $V_\ell$ with respect to $\nu_\rho$. From \eqref{condexp} it is easy to see that $Var_{\rho}[V_{\ell}]\leq C$, from where \eqref{eqvar} is
bounded by
\begin{equation*}
Ct\frac{\ell^3}{n^2}\sum_{x\in\mathbb{Z}}h^2(x),
\end{equation*}
which finishes the proof.
\end{proof}
We remark that from the previous estimate we cannot take $\ell=\varepsilon n$, otherwise the error would blow up, when taking $n\to\infty$.

The second step consists replacing the conditional expectation of $\bar{\eta}_{s}(x)\bar{\eta}_{s}(x+1)$ in the box of size $\ell$ by its conditional expectation in a box of size $2\ell$.

\begin{lemma} \label{renstep1}(Renormalization step)

For every $t>0$, $\ell\geq 2$ and any measurable function $h:\mathbb{Z}\to\mathbb{R}$,
\begin{equation*}
\mathbb{E}_{\rho}\Big[\int_{0}^{t}\sum_{x\in\mathbb{Z}}h(x)
\tilde{V}_{\ell, 2\ell}(\eta_{s})ds\Big]^{2}\leq Ct\frac{\ell}{n^2}\sum_{x\in\mathbb{Z}}h^2(x),
\end{equation*}
where $\tilde V_{\ell,2\ell}(\eta_s)=E_\rho\Big[\bar{\eta}_s(x)\bar{\eta}_s(x+1)\Big|\eta_s^\ell(x)\Big]-E_\rho\Big[\bar{\eta}_s(x)\bar{\eta}_s(x+1)\Big|\eta^{2\ell}_s(x)\Big].$
\end{lemma}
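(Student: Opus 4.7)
The plan is to mirror the $H_{-1}$-variational argument of Lemma \ref{oneblock}, but with the restricted generator $\mathcal{S}_{x,2\ell}$ on a box of size $2\ell$ rather than $\ell$, coupled with a sharp variance estimate for $\tilde V_{\ell,2\ell}$.

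First I would verify that $\tilde V_{\ell,2\ell}$ belongs to the image of $\mathcal{S}_{x,2\ell}$, i.e.\ $E_\rho[\tilde V_{\ell,2\ell}\mid\eta^{2\ell}(x)]=0$. Under $\nu_\rho$ the conditional law given $\eta^{2\ell}(x)$ is uniform on configurations of the $2\ell$-box with the prescribed total; since $\bar\eta(x)\bar\eta(x+1)$ depends only on sites $x$ and $x+1$, its conditional expectation given $\eta^\ell(x)$ is the same whether computed under $\nu_\rho$ or under this uniform measure, and the tower property applied inside the uniform measure produces the cancellation. With this in hand I would copy verbatim the steps of Lemma \ref{oneblock}: dominate the expectation by $C t\,\bigl\|\sum_x h(x)\tilde V_{\ell,2\ell}\bigr\|_{-1}^2$ via Proposition A1.6.1 of \cite{KL}; apply the variational formula \eqref{variationalformH-1norm} together with \eqref{norm bound} localised through $\mathcal{S}_{x,2\ell}$; sum over $x$ using that each bond belongs to at most $2\ell$ translates so that $\sum_x\langle g,-\mathcal{S}_{x,2\ell}g\rangle_\rho\leq 2\ell\,\langle g,-\mathcal{L}_n g\rangle_\rho$; and set $A_x=n^2(2\ell)^{-1}h(x)^{-1}$. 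The spectral gap for $\mathcal{S}_{x,2\ell}$ bounds $\langle\tilde V_{\ell,2\ell},(-\mathcal{S}_{x,2\ell})^{-1}\tilde V_{\ell,2\ell}\rangle_\rho$ by $C(2\ell)^2\Var_\rho[\tilde V_{\ell,2\ell}]$, so assembling the factors the expectation in the statement is controlled by
\begin{equation*}
C\,t\,\frac{\ell^3}{n^2}\,\Var_\rho[\tilde V_{\ell,2\ell}]\,\sum_{x\in\mathbb{Z}}h^2(x).
\end{equation*}

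The main obstacle, and the only genuinely new ingredient beyond Lemma \ref{oneblock}, is the sharp variance bound $\Var_\rho[\tilde V_{\ell,2\ell}]\leq C/\ell^2$. Using \eqref{condexp} one writes
\begin{equation*}
\tilde V_{\ell,2\ell}(\eta)=\bigl(\bar\eta^\ell(x)\bigr)^2-\bigl(\bar\eta^{2\ell}(x)\bigr)^2-\frac{\chi(\eta^\ell(x))}{\ell-1}+\frac{\chi(\eta^{2\ell}(x))}{2\ell-1},
\end{equation*}
and factors the leading difference as $(\bar\eta^\ell(x)-\bar\eta^{2\ell}(x))(\bar\eta^\ell(x)+\bar\eta^{2\ell}(x))$. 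Since $\bar\eta^\ell(x)$ and $\bar\eta^{2\ell}(x)$ are averages of $\nu_\rho$-independent centered Bernoullis with second moment of order $1/\ell$, Cauchy--Schwarz gives this product an $L^2$-norm of order $1/\ell$, hence variance $O(1/\ell^2)$; the two $\chi$-correction terms are pointwise $O(1/\ell)$ with variance only $O(1/\ell^3)$. Substituting yields the claimed bound $Ct\,(\ell/n^2)\sum_x h^2(x)$. This variance gain is essential: without it one would pay a polynomial cost at every doubling of $\ell$, and the dyadic iteration from $\ell=2$ up to $\ell=\varepsilon n$ that turns Lemmas \ref{oneblock} and \ref{renstep1} into Theorem \ref{2bg} would not close.
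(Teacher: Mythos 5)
Your proposal is correct and follows essentially the same route as the paper: the identical $H_{-1}$ variational bound localised through $\mathcal{S}_{x,2\ell}$ with $A_x=n^2(h(x))^{-1}(2\ell)^{-1}$, the spectral gap estimate, and the key variance bound $\Var_\rho[\tilde V_{\ell,2\ell}]\leq C/\ell^2$, which the paper dismisses as ``a simple computation'' and you usefully justify via the difference-of-squares factorisation (and by checking the mean-zero condition the paper leaves implicit). The only small imprecision is that the Cauchy--Schwarz step on the product $(\bar\eta^\ell-\bar\eta^{2\ell})(\bar\eta^\ell+\bar\eta^{2\ell})$ requires the $L^4$ norms of the factors, i.e.\ fourth moments of order $\ell^{-2}$, not merely the second moments you cite; these fourth-moment bounds do hold for averages of independent bounded centered variables, so the conclusion stands.
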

\begin{proof}

Using the same arguments as in the proof of Lemma \ref{oneblock}, the expectation becomes bounded by
\begin{equation*}
Ct \sup_{g \in L^{2}(\nu_{\rho})}\Big\{2\sum_{x\in\mathbb{Z}}h(x) \int
\tilde{V}_{\ell,2\ell}(\eta)g(\eta)\, d\nu_{\rho}-n^{2}<g,-\mathcal{S}_{n}g>_{\rho}\Big\}.
\end{equation*}
Therefore, taking for each $x$, $A_{x}=n^{2}(h(x))^{-1}(2\ell)^{-1}$ together with the spectral gap inequality, last expression becomes bounded by
\begin{equation}\label{eqvar1}
Ct\sum_{x\in\mathbb{Z}}h^2(x)\frac{\ell^3}{n^2}Var_\rho[\tilde {V}_{\ell,2\ell}].
\end{equation}
Since,
\begin{equation*}
\tilde{V}_{\ell,2\ell}(\eta)=(\bar\eta^\ell(x))^2-\frac{1}{\ell-1}\chi(\eta^\ell(x))-(\bar\eta^{2\ell}(x))^2+\frac{1}{2\ell-1}\chi(\eta^{2\ell}(x)),
\end{equation*}
then a simple computation shows that $Var_\rho[\tilde V_{\ell,2\ell}]\leq{\frac{C}{\ell^2}}$, from where we get that
 \eqref{eqvar1} is bounded by
 \begin{equation}
Ct\frac{\ell}{n^2}\sum_{x\in\mathbb{Z}}h^2(x),
\end{equation}
 which finishes the proof.
\end{proof}
We notice that instead of doubling the box in the previous lemma, we can also estimate the price for replacing for the conditional expectation in a box of size $\ell $ by the condition expectation in a box of size $L$.

\begin{lemma} \label{renstep2}($L$-Renormalization step)

For every $t>0$, $\ell\geq 2$ and any measurable function $h:\mathbb{Z}\to\mathbb{R}$,
\begin{equation*}
\mathbb{E}_{\rho}\Big[\int_{0}^{t}\sum_{x\in\mathbb{Z}}h(x)
\tilde  V_{\ell, L}(\eta_{s})ds\Big]^{2}\leq Ct\frac{L^3}{n^2\ell^2}\sum_{x\in\mathbb{Z}}h^2(x),
\end{equation*}
where $\tilde V_{\ell,L}(\eta_s)=E_\rho\Big[\bar{\eta}_s(x)\bar{\eta}_s(x+1)\Big|\eta_s^\ell(x)\Big]-
E_\rho\Big[\bar{\eta}_s(x)\bar{\eta}_s(x+1)\Big|\eta^{L}_s(x)\Big].$
\end{lemma}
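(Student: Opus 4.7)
The plan is to run the proof of Lemma \ref{renstep1} essentially verbatim, replacing the size-$2\ell$ box by the size-$L$ box. The crucial structural property that makes the machinery go through is that $\tilde V_{\ell,L}$ has zero conditional expectation given the number of particles in the outer box: by the tower property,
\[
E_\rho[\tilde V_{\ell,L}\mid \eta^L(x)] = E_\rho[\bar\eta(x)\bar\eta(x+1)\mid \eta^L(x)] - E_\rho[\bar\eta(x)\bar\eta(x+1)\mid \eta^L(x)] = 0,
\]
since the first conditional expectation, averaged against $\nu_\rho$ conditioned on $\eta^L(x)$, collapses to the second one. Hence $\tilde V_{\ell,L}$ lies in the range of the restricted symmetric generator $\mathcal S_{x,L}$ on the box $\{x,\dots,x+L-1\}$, which is exactly what allows the spectral gap on that box to be invoked.

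Using Proposition A1.6.1 of \cite{KL} and the variational formula \eqref{variationalformH-1norm} as in Lemmas \ref{oneblock} and \ref{renstep1}, I would bound the expectation by
\[
Ct\sup_{g\in L^2(\nu_\rho)}\Big\{2\sum_{x\in\mathbb{Z}}h(x)\int\tilde V_{\ell,L}(\eta)g(\eta)\,d\nu_\rho - n^2\langle g,-\mathcal S_n g\rangle_\rho\Big\}.
\]
Applying \eqref{norm bound} site-by-site with the restricted generator $\mathcal S_{x,L}$ and the choice $A_x=n^2 h(x)^{-1}L^{-1}$, and then using the translation-invariance estimate $\sum_x\langle g,-\mathcal S_{x,L}g\rangle_\rho \le L\,\langle g,-\mathcal L_n g\rangle_\rho$ (each bond lies in at most $L$ translates of the box), the supremum becomes bounded by
\[
Ct\sum_{x\in\mathbb{Z}}h^2(x)\frac{L}{n^2}\langle\tilde V_{\ell,L},(-\mathcal S_{x,L})^{-1}\tilde V_{\ell,L}\rangle_\rho.
\]
The spectral gap inequality for the SSEP on a box of size $L$ gives $\langle\tilde V_{\ell,L},(-\mathcal S_{x,L})^{-1}\tilde V_{\ell,L}\rangle_\rho\le L^2\, Var_\rho[\tilde V_{\ell,L}]$.

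The only genuinely new ingredient is the variance bound $Var_\rho[\tilde V_{\ell,L}]\le C/\ell^2$, which I would establish from the explicit representation obtained via \eqref{condexp}:
\[
\tilde V_{\ell,L}(\eta) = (\bar\eta^\ell(x))^2 - \frac{\chi(\eta^\ell(x))}{\ell-1} - (\bar\eta^L(x))^2 + \frac{\chi(\eta^L(x))}{L-1}.
\]
Under the product measure $\nu_\rho$, $\bar\eta^\ell(x)$ is a normalized sum of $\ell$ i.i.d.\ centred variables of variance $\chi(\rho)$, so a direct fourth-moment computation gives $Var_\rho[(\bar\eta^\ell(x))^2] = O(1/\ell^2)$ and $Var_\rho[\chi(\eta^\ell(x))/(\ell-1)]=O(1/\ell^3)$; the analogous $L$-terms are $O(1/L^2),O(1/L^3)$, which under the hypothesis $L\ge\ell$ are at most $O(1/\ell^2)$. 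Chaining these yields the claim $Ct\, L^3/(n^2\ell^2)\sum_x h^2(x)$. The only point that warrants any care is verifying that enlarging the outer box from $2\ell$ to arbitrary $L$ does not degrade the variance past $O(1/\ell^2)$; this is really the only place where the present lemma differs in substance from Lemma \ref{renstep1}, and it reduces to routine joint-moment bookkeeping under $\nu_\rho$.
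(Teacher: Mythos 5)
Your proposal is correct and follows the paper's proof essentially verbatim: the paper likewise takes $A_x=n^2(h(x))^{-1}L^{-1}$, applies the spectral gap on the box of size $L$ to get the factor $L^2\Var_\rho[\tilde V_{\ell,L}]$, and uses $\Var_\rho[\tilde V_{\ell,L}]\leq C/\ell^2$. The only difference is that you spell out the variance bound and the mean-zero property that the paper leaves implicit, and both of your justifications are sound.
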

\begin{proof}
The only difference  with respect to the previous proof is that here we take for each $x$, $A_{x}=n^{2}(h(x))^{-1}L^{-1}$, the spectral gap inequality $<\tilde V_{\ell, L},(-\mc{S}_{x,L})^{-1}\tilde V_{\ell, L}>_{\rho}\leq L^2Var_\rho[\tilde V_{\ell,L}]$ and
in this case $Var_\rho(\tilde{V}_{\ell,L})\leq{\frac{C}{\ell^2}}$, from where the result follows.
\end{proof}

So far we have been able to replace the local function $\bar\eta(x)\bar\eta(x+1)$ by its condition expectation in a box of size $2\ell$. Now we want to increase the box in order to get to one of size $\epsilon n$. We notice that so far we cannot do that, otherwise the errors (obtained in the One-block estimate and in the Renormalization step)  explode, as $n\to\infty$. In the next lemma we compute the price to go any box.

\begin{lemma} \label{twoblocks}(Two-blocks estimate)

For every $t>0$, $\ell\geq 2$, $\ell\geq{\ell_0}$ and any measurable function $h:\mathbb{Z}\to\mathbb{R}$,
\begin{equation*}
\mathbb{E}_{\rho}\Big[\int_{0}^{t}\sum_{x\in\mathbb{Z}}h(x)
\tilde V_{\ell_0,\ell}(\eta_{s})ds\Big]^{2}\leq Ct\frac{\ell}{n^2}\sum_{x\in\mathbb{Z}}h^2(x).
\end{equation*}
\end{lemma}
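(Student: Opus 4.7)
The plan is to reach the conditional expectation on the box of size $\ell$ starting from the box of size $\ell_0$ by iterating the doubling Renormalization step (Lemma \ref{renstep1}) a logarithmic number of times, closing the remaining gap, if necessary, with one application of the $L$-Renormalization step (Lemma \ref{renstep2}). The key observation is that a single direct use of Lemma \ref{renstep2} yields an error of order $\ell^3/(n^2\ell_0^2)$, which blows up as the ratio $\ell/\ell_0$ grows, whereas the geometric nature of doubling makes the successive errors sum to a bound linear in $\ell$.

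Concretely, I would let $k$ be the largest integer with $2^k\ell_0 \leq \ell$, so that $2^k\ell_0 > \ell/2$. Using the tower property of conditional expectation, I decompose
\[
\tilde V_{\ell_0,\ell}(\eta_s) \;=\; \sum_{i=0}^{k-1} \tilde V_{2^i\ell_0,\,2^{i+1}\ell_0}(\eta_s) \;+\; \tilde V_{2^k\ell_0,\,\ell}(\eta_s).
\]
Applying Minkowski's inequality in $L^2(\mathbb{P}_\rho)$ to the time-space integral of $h(x)\tilde V_{\ell_0,\ell}(\eta_s)$, and then invoking Lemma \ref{renstep1} on each of the first $k$ summands together with Lemma \ref{renstep2} on the last one, I bound the square root of the left-hand side of the lemma by
\[
\sum_{i=0}^{k-1} \sqrt{Ct\,\frac{2^i\ell_0}{n^2}\sum_{x\in\mathbb{Z}} h^2(x)} \;+\; \sqrt{Ct\,\frac{\ell^3}{n^2(2^k\ell_0)^2}\sum_{x\in\mathbb{Z}} h^2(x)}.
\]
The geometric series has ratio $\sqrt{2}$ and is therefore dominated, up to a multiplicative constant, by its last term, giving a bound of order $\sqrt{Ct\,2^k\ell_0/n^2\,\sum_x h^2(x)} \leq C\sqrt{Ct\ell/n^2\,\sum_x h^2(x)}$; the extra $L$-renormalization term, using $2^k\ell_0 > \ell/2$, is at most a constant times the same quantity. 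Squaring yields the claimed estimate.

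I do not expect substantial obstacles beyond this bookkeeping. The conceptual content is the recognition that doubling geometrizes the iteration so that the final block size, rather than the total number of iterations, governs the error; this is precisely why separating the One-block estimate from the Two-blocks estimate is advantageous in the proof of the second-order Boltzmann-Gibbs Principle.
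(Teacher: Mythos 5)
Your argument is correct and follows essentially the same route as the paper's proof: the same telescoping decomposition into doubling steps plus one final $L$-renormalization step, Minkowski's inequality, and the observation that the geometric series is dominated by its last term of order $\sqrt{2^k\ell_0}\leq \sqrt{\ell}$. The only cosmetic difference is your choice of indexing ($2^k\ell_0>\ell/2$ versus the paper's $2^M\ell_0\leq\ell\leq 2^{M+1}\ell_0$), which yields the identical bound.
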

\begin{proof}
We start by giving the proof in the case $\ell=2^M\ell_0$. In this case we write
\begin{equation*}
\tilde V_{\ell_0,\ell}(\eta_{s})=\sum_{i=0}^{M-1}\tilde V_{2^i\ell_0,2^{i+1}\ell_0}(\eta_s).
\end{equation*}
Therefore, by Minkowski's inequality the expectation in the statement of the lemma is bounded from above by
\begin{equation*}
\Big\{\sum_{i=1}^{M}\Big(\mathbb{E}_{\rho}\Big[\int_{0}^t\sum_{x\in\mathbb{Z}}h(x)\tilde{V}_{2^{i-1}\ell_0,2^{i}\ell_0}ds\Big]^2\Big)^{1/2}\Big\}^2.
\end{equation*}
From the Renormalization step (Lemma \ref{renstep1}) we have
\begin{equation*}
\mathbb{E}_{\rho}\Big[\int_{0}^t\sum_{x\in\mathbb{Z}}h(x)\tilde{V}_{2^{i-1}\ell_0,2^{i}\ell_0}ds\Big]^2\leq Ct\frac{2^i\ell_0}{n^2}\sum_{x\in\mathbb{Z}}h^2(x),
\end{equation*}
from where we get that last expression bounded by
\begin{equation*}
    Ct\Big\{\sum_{i=1}^{M}2^{i/2}\Big\}^2\frac{\ell_0}{n^2}\sum_{x\in\mathbb{Z}}h^2(x)\leq Ct\frac{2^M\ell_0}{n^2}\sum_{x\in\mathbb{Z}}h^2(x).
\end{equation*}
This proves the lemma for the case $\ell=2^M\ell_0$. In the other cases, we choose $M$ such that $2^M\ell_0\leq \ell \leq 2^{M+1}\ell_0$.
Then, \begin{equation*}
\begin{split}
\mathbb{E}_{\rho}\Big[\int_{0}^{t}\sum_{x\in\mathbb{Z}}h(x)
\tilde V_{\ell_0,\ell}(\eta_{s})ds\Big]^{2}\leq \Big\{\sum_{i=1}^{M}\Big(\mathbb{E}_{\rho}\Big[\int_{0}^t\sum_{x\in\mathbb{Z}}h(x)\tilde{V}_{2^{i-1}\ell_0,2^{i}\ell_0}ds\Big]^2\Big)^{1/2}\\
+\Big(\mathbb{E}_{\rho}\Big[\int_{0}^{t}\sum_{x\in\mathbb{Z}}h(x)
\tilde  V_{2^M\ell_0, \ell}(\eta_{s})ds\Big]^{2}\Big)^{1/2}\Big\}^2.
\end{split}
\end{equation*}
From the previous computations, we have that the first term on the right hand side of the previous expression is bounded from above by
$Ct\frac{\ell}{n^2}\sum_{x\in\mathbb{Z}}h^2(x).$ To bound the second term, we notice that by the $L$-Renormalization step (Lemma \ref{renstep2}) we get that
\begin{equation*}
\begin{split}
\mathbb{E}_{\rho}\Big[\int_{0}^{t}\sum_{x\in\mathbb{Z}}h(x)
\tilde  V_{2^M\ell_0, \ell}(\eta_{s})ds\Big]^{2}&\leq Ct \frac{\ell^3}{n^2(2^M\ell_0)^2}\sum_{x\in\mathbb{Z}}h^2(x)\\
&\leq Ct \frac{(2^{M+1}\ell_0)^3}{n^2(2^M\ell_0)^2}\sum_{x\in\mathbb{Z}}h^2(x)
\leq Ct\frac{\ell}{n^2}\sum_{x\in\mathbb{Z}}h^2(x).
\end{split}
\end{equation*}
which ends the proof.
\end{proof}

\subsubsection{The proof of Theorem \ref{2bg}}
From the previous computations we obtain that
the expectation appearing in the statement of the Theorem is bounded by

\begin{equation*}
Ct\frac{\varepsilon n}{n^2}\sum_{x\in\mathbb{Z}}h^2\Big(\frac{x}{n}\Big)
\end{equation*}
which converges to $Ct\varepsilon\|h\|_2^2$, as $n\to+\infty$, and then vanishes taking $\varepsilon\to 0$.

\subsection{The Boltzmann-Gibbs Principle for occupation variables}
In this section we make some remarks concerning the Boltzmann-Gibbs Principle.
To make it as general as we can let $m\in\mathbb{N}$ and $x\in\mathbb{Z}$ and let $f^x_m(\eta)=\prod_{y=x}^{x+m-1}\bar\eta(y)$. A simple computation shows that
for any $\sigma\in[0,1],$ $E_{\sigma}[f^x_m]=(\sigma-\rho)^m$.

Now we establish the Boltzmann-Gibbs Principle (Theorem \ref{2bg}) for the function $f_m^x$. We have already done it for the case $m=2$ and now we look to the other cases.

\begin{lemma} \label{moneblock}(One-block estimate)

For every $t>0$, $\ell\geq 2$, $m\in\mathbb{N}$ and any measurable function $h:\mathbb{Z}\to\mathbb{R}$,
\begin{equation*}
\mathbb{E}_{\rho}\Big[\int_{0}^{t}\sum_{x\in\mathbb{Z}}h(x)
V^m_{\ell}(\eta_{s})ds\Big]^{2}\leq Ct\frac{\ell^3}{n^2}\sum_{x\in\mathbb{Z}}h^2(x),
\end{equation*}
where $V^m_{\ell}(\eta_s)=f^x_m(\eta_s)-E_\rho\Big[f^x_m(\eta_s)\Big|\eta_s^\ell(x)\Big].$
\end{lemma}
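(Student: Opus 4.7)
The plan is to run exactly the same argument as in the proof of Lemma \ref{oneblock}, replacing $V_\ell$ by $V_\ell^m$ throughout. The two structural ingredients required to make that proof work are: (i) $E_\rho[V_\ell^m \mid \eta_s^\ell(x)] = 0$ by construction, which is what places $V_\ell^m$ in the image of the restricted symmetric generator $\mathcal{S}_{x,\ell}$ and guarantees a finite $H_{-1}$ norm with respect to it; and (ii) a uniform variance bound $\mathrm{Var}_\rho[V_\ell^m] \leq C$, with $C$ possibly depending on $m$ but independent of $\ell$ and $n$. Both hold for every $m$, so the proof transfers with only cosmetic changes.

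Concretely, I would first apply Proposition A1.6.1 of \cite{KL} to bound the expectation by $Ct \,\| \sum_{x} h(x) V_\ell^m \|_{-1}^2$, then invoke the variational formula \eqref{variationalformH-1norm}, reducing matters to controlling
\begin{equation*}
Ct\sup_{g \in L^2(\nu_\rho)} \Big\{ 2 \sum_{x \in \bb Z} h(x) \int V_\ell^m(\eta)\,g(\eta)\,d\nu_\rho - n^2 \langle g, -\mathcal{S}_n g\rangle_\rho \Big\}.
\end{equation*}
For each $x$ separately I would use the dual bound \eqref{norm bound} with $A_x = n^2 (h(x))^{-1} \ell^{-1}$ and localize the Dirichlet form to the box of size $\ell$ starting at $x$ via $\mathcal{S}_{x,\ell}$; the $g$-side contributions then sum to at most $n^2 \langle g, -\mathcal{L}_n g\rangle_\rho$ thanks to the translation-invariance identity $\sum_x \langle g, -\mathcal{S}_{x,\ell} g\rangle_\rho \leq \ell \langle g, -\mathcal{L}_n g\rangle_\rho$, which cancels the Dirichlet penalty in the variational problem.

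What remains is to estimate $\langle V_\ell^m, (-\mathcal{S}_{x,\ell})^{-1} V_\ell^m\rangle_\rho$. By (i) this is finite, and the spectral gap inequality \cite{Qua} for symmetric exclusion on a box of size $\ell$ gives $\langle V_\ell^m, (-\mathcal{S}_{x,\ell})^{-1} V_\ell^m\rangle_\rho \leq \ell^2 \mathrm{Var}_\rho[V_\ell^m]$. Property (ii) is the only step that is genuinely $m$-dependent, but it is trivial: each factor $\bar\eta(y)$ is bounded by $\max(\rho,1-\rho) \leq 1$, hence $|f_m^x| \leq 1$, $|V_\ell^m| \leq 2$, and $\mathrm{Var}_\rho[V_\ell^m] \leq 4$ uniformly in $\ell$. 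Putting everything together yields the claimed bound $Ct(\ell^3/n^2)\sum_x h^2(x)$. I do not anticipate any genuine obstacle: the argument is parallel to the $m=2$ case, and the higher-order products cause no new difficulty because the boundedness of the occupation variables makes the variance estimate automatic.
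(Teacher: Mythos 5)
Your proof is correct and is essentially the paper's argument: the paper itself only remarks that the proof of Lemma \ref{oneblock} carries over verbatim because $\Var_\rho[V_\ell^m]\leq C$, which is exactly the uniform variance bound you verify from $|\bar\eta(y)|\leq 1$. (Both you and the paper implicitly assume $\ell\geq m$ so that $f_m^x$ is measurable with respect to the box on which $\mathcal{S}_{x,\ell}$ acts and the zero conditional expectation places $V_\ell^m$ in its image.)
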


In this case the error is the same as above, since $Var_\rho[V_\ell^m]\leq C$. Nevertheless, in the  Renormalization step, the higher the value of $m$ the less is the error.
\begin{lemma} \label{mrenstep1}(Renormalization step)

For every $t>0$, $\ell\geq 2$, $m\in\mathbb{N}$ and any measurable function $h:\mathbb{Z}\to\mathbb{R}$,
\begin{equation*}
\mathbb{E}_{\rho}\Big[\int_{0}^{t}\sum_{x\in\mathbb{Z}}h(x)
\tilde V^m_{\ell, 2\ell}(\eta_{s})ds\Big]^{2}\leq Ct\frac{\ell^{3-m}}{n^2}\sum_{x\in\mathbb{Z}}h^2(x),
\end{equation*}
where $\tilde V^m_{\ell,2\ell}(\eta_s)=E_\rho\Big[f^x_m(\eta_s)\Big|\eta_s^\ell(x)\Big]-E_\rho\Big[f_m^x(\eta_s)\Big|\eta^{2\ell}_s(x)\Big].$
\end{lemma}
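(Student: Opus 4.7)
The plan is to follow the proof of Lemma~\ref{renstep1} essentially verbatim, with $\tilde V_{\ell,2\ell}$ replaced by $\tilde V^m_{\ell,2\ell}$ throughout, and only the closing variance estimate upgraded. First I would verify that $\tilde V^m_{\ell,2\ell}$ lies in the image of $-\mc{S}_{x,2\ell}$: under $\nu_\rho$, conditioning on $\eta^{2\ell}(x)$ makes the configuration in the $2\ell$-box exchangeable, so by the tower property $E_\rho[E_\rho[f^x_m|\eta^\ell(x)]\,|\,\eta^{2\ell}(x)] = E_\rho[f^x_m|\eta^{2\ell}(x)]$, whence $E_\rho[\tilde V^m_{\ell,2\ell}|\eta^{2\ell}(x)]=0$. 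Then the usual chain---Proposition~A1.6.1 of \cite{KL}, the variational formula \eqref{variationalformH-1norm} for the $H_{-1}$-norm, the inequality \eqref{norm bound} with $A_x=n^2(h(x))^{-1}(2\ell)^{-1}$, and the spectral gap estimate $\langle \tilde V^m_{\ell,2\ell},(-\mc{S}_{x,2\ell})^{-1}\tilde V^m_{\ell,2\ell}\rangle_\rho \leq (2\ell)^2\Var_\rho[\tilde V^m_{\ell,2\ell}]$---reduces the expectation to
\begin{equation*}
Ct\,\frac{\ell^3}{n^2}\sum_{x\in\mathbb{Z}}h^2(x)\,\Var_\rho[\tilde V^m_{\ell,2\ell}],
\end{equation*}
and the lemma will follow from the sharper bound $\Var_\rho[\tilde V^m_{\ell,2\ell}] \leq C\ell^{-m}$.

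The main obstacle is precisely this variance bound; this is where the entire gain over the $m=2$ case enters, everything else being a mechanical transposition of the previous proof. My plan is to write $\bar\eta(y) = Z_y + \bar\eta^\ell(x)$ for $y\in\{x,\ldots,x+\ell-1\}$, where $Z_y := \eta(y) - \eta^\ell(x)$ is centered with respect to the empirical mean of the box, and to expand
\begin{equation*}
E_\rho[f^x_m|\eta^\ell(x)] = \sum_{j=0}^m \binom{m}{j}(\bar\eta^\ell(x))^{m-j}\,c_j(\eta^\ell(x)),
\end{equation*}
where $c_j(s) := E_\rho[Z_x\,Z_{x+1}\cdots Z_{x+j-1}\,|\,\eta^\ell(x)=s]$ is the $j$-th joint centered moment of the hypergeometric law on the box. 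One has $c_0=1$, $c_1=0$, $c_2(s)=-\chi(s)/(\ell-1)$, and in general $|c_j(s)| \leq C\ell^{-\lceil j/2\rceil}$ uniformly in $s\in[0,1]$; this last bound is the only nontrivial input and can be obtained either by direct computation, as in the paper's derivation of $c_2$, or by invoking the known cumulant scaling of the hypergeometric distribution. Combined with the moment estimate $E_\rho[(\bar\eta^\ell(x))^{2k}]\leq C_k\ell^{-k}$ for sample means of centered bounded i.i.d.\ variables, Cauchy--Schwarz yields
\begin{equation*}
\Var_\rho\!\left[(\bar\eta^\ell(x))^{m-j}\,c_j(\eta^\ell(x))\right] \leq C\,\ell^{-(m-j)-2\lceil j/2\rceil} \leq C\,\ell^{-m}
\end{equation*}
for every $j\in\{0,\ldots,m\}$, so summing over $j$ gives $\Var_\rho[E_\rho[f^x_m|\eta^\ell(x)]]\leq C\ell^{-m}$, and the $2\ell$-contribution is handled identically. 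The only delicate point is the bookkeeping that establishes the scaling $c_j = O(\ell^{-\lceil j/2\rceil})$, ensuring that no term of smaller inverse power in $\ell$ slips into the expansion.
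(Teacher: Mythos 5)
Your proposal follows the paper's own route exactly: the paper's proof of this lemma consists of the single remark that one repeats the $m=2$ argument (the $H_{-1}$ variational bound, the choice $A_x=n^2(h(x))^{-1}(2\ell)^{-1}$, and the spectral gap inequality) with the improved estimate $\Var_\rho[\tilde V^m_{\ell,2\ell}]\leq C\ell^{-m}$, which is precisely your reduction. Your expansion of $E_\rho[f^x_m\mid\eta^\ell(x)]$ in powers of $\bar\eta^\ell(x)$ with the centered hypergeometric moments $c_j=O(\ell^{-\lceil j/2\rceil})$ is a correct derivation of that variance bound, which the paper merely asserts.
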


\begin{proof}
The only difference  with respect to the proof in the case $m=2$ is that here  $Var_\rho[\tilde{V}^m_{\ell,2\ell}]\leq{\frac{C}{\ell^m}}$, from where the result follows.
\end{proof}

In the case of the $L$-Renormalization step we  have that

\begin{lemma} \label{mrenstep2}(L-Renormalization step)

For every $t>0$, $\ell\geq 2$, $m\in\mathbb{N}$ and any measurable function $h:\mathbb{Z}\to\mathbb{R}$,
\begin{equation*}
\mathbb{E}_{\rho}\Big[\int_{0}^{t}\sum_{x\in\mathbb{Z}}h(x)
\tilde  V^m_{\ell, L}(\eta_{s})ds\Big]^{2}\leq Ct\frac{L^3}{n^2\ell^m}\sum_{x\in\mathbb{Z}}h^2(x),
\end{equation*}
where $\tilde V^m_{\ell,L}(\eta_s)=E_\rho\Big[f^x_m(\eta_s)\Big|\eta_s^\ell(x)\Big]-
E_\rho\Big[f_m^x(\eta_s)\Big|\eta^{L}_s(x)\Big].$
\end{lemma}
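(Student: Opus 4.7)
The plan is to mimic verbatim the argument used for Lemma~\ref{renstep2}, with the single essential modification that the variance estimate now depends on $m$. First I would invoke Proposition A1.6.1 of \cite{KL} to bound the squared expectation by $Ct$ times the square of the $H_{-1}$-norm of $\sum_{x\in\mathbb{Z}} h(x) \tilde V^m_{\ell,L}(\eta)$, and then use the variational formula \eqref{variationalformH-1norm} to write this as
\begin{equation*}
Ct \sup_{g\in L^2(\nu_\rho)} \Big\{ 2\sum_{x\in\mathbb{Z}} h(x)\int \tilde V^m_{\ell,L}(\eta) g(\eta)\,d\nu_\rho - n^2\langle g, -\mathcal{S}_n g\rangle_\rho \Big\}.
\end{equation*}
Since $E_\rho[\tilde V^m_{\ell,L} \,|\, \eta^L(x)] = 0$, the function $\tilde V^m_{\ell,L}$ lies in the image of $\mathcal{S}_{x,L}$, so \eqref{norm bound} applied with $A_x = n^2 (h(x))^{-1} L^{-1}$ together with the translation-invariance bound $\sum_x \langle g, -\mathcal{S}_{x,L}g\rangle_\rho \le L\langle g, -\mathcal{L}_n g\rangle_\rho$ reduces the problem to controlling
\begin{equation*}
Ct\sum_{x\in\mathbb{Z}} h^2(x)\frac{L}{n^2}\langle \tilde V^m_{\ell,L}, (-\mathcal{S}_{x,L})^{-1} \tilde V^m_{\ell,L}\rangle_\rho.
\end{equation*}

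Next I would apply the spectral gap inequality for the symmetric simple exclusion process restricted to a box of size $L$ to obtain $\langle \tilde V^m_{\ell,L}, (-\mathcal{S}_{x,L})^{-1} \tilde V^m_{\ell,L}\rangle_\rho \le L^2 \Var_\rho[\tilde V^m_{\ell,L}]$. Combining the three powers of $L$ yields exactly the factor $L^3/(n^2)$ in front, so the whole proof comes down to establishing the variance bound
\begin{equation*}
\Var_\rho[\tilde V^m_{\ell,L}] \le \frac{C}{\ell^m}.
\end{equation*}

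The main (and really only) obstacle is this variance estimate, which is where the exponent $m$ enters the final bound. To prove it, I would use the explicit hypergeometric formula for $E_\rho[f_m^x\,|\,\eta^\ell(x)]$: since $\nu_\rho$ conditioned on $\eta^\ell(x)=k/\ell$ is the uniform measure on configurations with $k$ particles in $\{x,\dots,x+\ell-1\}$, a direct computation in the spirit of \eqref{condexp} shows that this conditional expectation is a polynomial of degree $m$ in $\bar\eta^\ell(x)$, with leading term $(\bar\eta^\ell(x))^m$ and lower order corrections of size $O(\ell^{-1})$ relative to the leading term. Under $\nu_\rho$, $\bar\eta^\ell(x)$ has variance $\chi(\rho)/\ell$, so each monomial $(\bar\eta^\ell(x))^j$ for $j\le m$ has variance at most $C/\ell^{\max(j,m)}$, and the same reasoning gives the analogous bound for the $L$-block term. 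Taking the difference and using $\ell \le L$ preserves the estimate $C/\ell^m$.

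Finally, plugging the variance bound into the previous display yields
\begin{equation*}
Ct\sum_{x\in\mathbb{Z}} h^2(x)\frac{L}{n^2}\cdot L^2 \cdot \frac{C}{\ell^m} \;=\; Ct\frac{L^3}{n^2\ell^m}\sum_{x\in\mathbb{Z}} h^2(x),
\end{equation*}
which is the desired inequality. The whole argument parallels Lemma~\ref{renstep2}; the only place where $m$ intervenes is in the sharper decay of $\Var_\rho[\tilde V^m_{\ell,L}]$, reflecting that the $m$-th moment of a block average of size $\ell$ fluctuates on scale $\ell^{-m/2}$.
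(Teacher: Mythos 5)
Your proposal is correct and follows essentially the same route as the paper: the paper's own proof of this lemma is a one-line reference to the argument of Lemma~\ref{renstep2} with the choice $A_x=n^2(h(x))^{-1}L^{-1}$, the spectral gap bound $\langle \tilde V^m_{\ell,L},(-\mathcal{S}_{x,L})^{-1}\tilde V^m_{\ell,L}\rangle_\rho\leq L^2\Var_\rho[\tilde V^m_{\ell,L}]$, and the asserted variance estimate $\Var_\rho[\tilde V^m_{\ell,L}]\leq C\ell^{-m}$, exactly as you describe. Your sketch of the variance bound (the paper leaves it as a ``simple computation'') is slightly loose in phrasing --- it is the coefficients attached to the lower-degree monomials in $\bar\eta^\ell(x)$, not the monomials themselves, that keep their contribution at order $\ell^{-m}$ --- but the conclusion and the overall argument match the paper.
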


\begin{lemma} \label{mtwoblocks}(Two-blocks estimate)

For every $t>0$, $\ell\geq 2$, $m\in\mathbb{N}$ and any measurable function $h:\mathbb{Z}\to\mathbb{R}$,
\begin{equation*}
\mathbb{E}_{\rho}\Big[\int_{0}^{t}\sum_{x\in\mathbb{Z}}h(x)
\tilde V^m_{\ell_0,\ell}(\eta_{s})ds\Big]^{2}\leq Ct\frac{\ell}{n^2}\sum_{x\in\mathbb{Z}}h^2(x).
\end{equation*}
\end{lemma}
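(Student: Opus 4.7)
The plan is to mirror the strategy of Lemma \ref{twoblocks}, but now exploiting the improved decay $\ell^{3-m}/n^2$ in the Renormalization step (Lemma \ref{mrenstep1}) instead of the linear-in-$\ell$ estimate used when $m=2$. I would first handle the dyadic case $\ell=2^{M}\ell_0$, and then patch the general $\ell$ by combining the dyadic result with the $L$-Renormalization step (Lemma \ref{mrenstep2}).

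For $\ell=2^{M}\ell_0$, write the telescoping identity
\begin{equation*}
\tilde V^m_{\ell_0,\ell}(\eta_s)=\sum_{i=0}^{M-1}\tilde V^m_{2^{i}\ell_0,\,2^{i+1}\ell_0}(\eta_s),
\end{equation*}
and apply Minkowski's inequality to reduce the $L^2(\mathbb{P}_\rho)$-norm of the time-space integral to a sum over $i$. Lemma \ref{mrenstep1} bounds the $i$-th term by
\begin{equation*}
\mathbb{E}_{\rho}\Big[\int_{0}^{t}\sum_{x\in\mathbb{Z}}h(x)\,\tilde V^m_{2^{i-1}\ell_0,\,2^{i}\ell_0}(\eta_s)\,ds\Big]^{2}\leq Ct\,\frac{(2^{i}\ell_0)^{3-m}}{n^{2}}\sum_{x\in\mathbb{Z}}h^{2}(x),
\end{equation*}
and squaring the sum of square roots contributes a factor $\bigl(\sum_{i=1}^{M} 2^{i(3-m)/2}\bigr)^{2}\ell_0^{3-m}$. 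For $m=2$ this geometric sum is of order $2^{M/2}$, giving back $2^{M}\ell_0=\ell$, as before; for $m=3$ the summand is constant, so the sum equals $M$ and the square is $M^{2}=O((\log \ell)^{2})\leq C\ell$; for $m\geq 4$ the geometric series converges and the resulting factor is $O(\ell_0^{3-m})$, trivially bounded by $\ell$. In every case the dyadic bound is $Ct\,\ell/n^{2}\sum_{x}h^{2}(x)$.

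For arbitrary $\ell\geq 2$, pick $M$ with $2^{M}\ell_0\leq\ell<2^{M+1}\ell_0$, decompose
\begin{equation*}
\tilde V^m_{\ell_0,\ell}(\eta_s)=\tilde V^m_{\ell_0,\,2^{M}\ell_0}(\eta_s)+\tilde V^m_{2^{M}\ell_0,\,\ell}(\eta_s),
\end{equation*}
and use Minkowski once more. The first piece is controlled by the dyadic argument. For the residual piece, Lemma \ref{mrenstep2} with $L=\ell$ gives the bound $Ct\,\ell^{3}/(n^{2}(2^{M}\ell_0)^{m})\sum_{x}h^{2}(x)$; since $2^{M}\ell_0\geq\ell/2$, this is at most $C\,2^{m}\,t\,\ell^{3-m}/n^{2}\sum_{x}h^{2}(x)\leq Ct\,\ell/n^{2}\sum_{x}h^{2}(x)$ for every $m\geq 2$. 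Combining the two pieces yields the claim.

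The step I expect to be the most delicate is the marginal case $m=3$, where the Renormalization estimate yields only $1/n^{2}$ per dyadic scale with no gain in $\ell$; the Minkowski sum then produces a logarithmic factor $(\log(\ell/\ell_0))^{2}$. The observation that absorbs this is simply that $(\log \ell)^{2}\leq C\ell$ for $\ell\geq \ell_0$, so the logarithm is harmlessly swallowed by the target bound $\ell/n^{2}$. For $m\geq 4$ the geometric decay is strong enough that the series converges and no such subtlety arises; for $m=2$ the estimate is the same as in Lemma \ref{twoblocks}.
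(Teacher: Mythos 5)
Your proof is correct and follows essentially the same route as the paper: the dyadic telescoping plus Minkowski's inequality, Lemma \ref{mrenstep1} at each scale (with the geometric sum $\sum_i 2^{i(3-m)/2}$ treated case by case), and Lemma \ref{mrenstep2} to absorb the non-dyadic remainder. One remark: exactly as in the paper's own computation, the case $m=1$ yields $\ell^2/n^2$ rather than $\ell/n^2$, so your restriction to $m\geq 2$ in the final step is the honest reading of the statement (consistent with $c_1(\ell)=\ell^2$ in Theorem \ref{genbg}).
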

\begin{proof}
In this case we notice that the only difference in the proof is that now we have the expectation bounded by
\begin{equation*}
    Ct\Big\{\sum_{i=1}^{M}2^{i(3-m)/2}\Big\}^2\frac{\ell_0^{3-m}}{n^2}\sum_{x\in\mathbb{Z}}h^2(x).
\end{equation*}
Now, if $m=1$ then last expression is bounded by
\begin{equation*}
    Ct\Big\{\sum_{i=1}^{M}2^{i}\Big\}^2\frac{\ell_0^{2}}{n^2}\sum_{x\in\mathbb{Z}}h(x)\leq Ct\frac{\ell^2}{n^2}\sum_{x\in\mathbb{Z}}h^2(x),
\end{equation*}
while if $m=3$ it is bounded by
\begin{equation*}
    Ct\Big\{\sum_{i=1}^{M}1\Big\}^2\frac{1}{n^2}\sum_{x\in\mathbb{Z}}h^2(x)=CtM^2\frac{1}{n^2}\sum_{x\in\mathbb{Z}}h^2(x).
\end{equation*}
 Since $\ell=2^M\ell_0$, then last expression equals to $Ct\frac{(\log(\ell))^2}{n^2}\sum_{x\in\mathbb{Z}}h^2(x).$ For the other cases of $m$, one notices that $\sum_{i=1}^{M}2^{i(3-m)/2}\leq \sum_{i=1}^{\infty}2^{i(3-m)/2}<\infty$ from where we get the bound
\begin{equation*}
Ct\frac{1}{n^2}\sum_{x\in\mathbb{Z}}h^2(x).
\end{equation*}
\end{proof}

We can summarize the Boltzmann-Gibbs Principle as follows:

\begin{theorem}{(General Boltzmann-Gibbs Principle)} \label{genbg}

For every $t>0$, $\ell\geq 2$, $m\in\mathbb{N}$ and any measurable function $h:[0,T]\times\mathbb{Z}\to\mathbb{R}$,
\begin{equation*}
\mathbb{E}_{\rho}\Big[\int_{0}^{t}
\frac{1}{\theta(n)}\sum_{x\in{\mathbb{Z}}}h_s\Big(\frac{x}{n}\Big)\Big\{f_m^x(\eta_s)-E_\rho\Big[f_m^x(\eta_s)\Big|\eta^{\ell}_s(x)\Big]\Big\}\, ds\Big]^2\leq{Ct\frac{c_m(\ell)}{n(\theta(n))^2}\|h\|_{2,n}^2},
\end{equation*}
where $\|h\|_{2,n}^2=\frac{1}{n}\sum_{x\in\mathbb{Z}}h^2\Big(\frac{x}{n}\Big)$ and
\begin{equation}\label{clm}
c_m(\ell)=\left\{\begin{array}{cl}
\ell^2,& \mbox{if}\,\,\, m=1\,,\\
\ell,& \mbox{if} \,\,\,m=2\,,\\
(\log(\ell))^2,& \mbox{if} \,\,\,m=3\,,\\
1,& \mbox{otherwise,}
\end{array}
\right.
\end{equation}
\end{theorem}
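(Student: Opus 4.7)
The plan is to derive Theorem \ref{genbg} as a direct combination of the one-block estimate (Lemma \ref{moneblock}) and the two-blocks estimate (Lemma \ref{mtwoblocks}), exactly parallel to how Theorem \ref{2bg} was obtained from Lemmas \ref{oneblock}--\ref{twoblocks} in the case $m=2$. Fix a constant $\ell_{0}=\max(m,2)$, independent of both $n$ and $\ell$, and split the integrand telescopically:
\[
f_{m}^{x}-E_{\rho}\bigl[f_{m}^{x}\bigm|\eta^{\ell}(x)\bigr] \;=\; V_{\ell_{0}}^{m}\;+\;\tilde V_{\ell_{0},\ell}^{m},
\]
where $V_{\ell_{0}}^{m}=f_{m}^{x}-E_{\rho}[f_{m}^{x}\mid\eta^{\ell_{0}}(x)]$ and $\tilde V_{\ell_{0},\ell}^{m}=E_{\rho}[f_{m}^{x}\mid\eta^{\ell_{0}}(x)]-E_{\rho}[f_{m}^{x}\mid\eta^{\ell}(x)]$. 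I then apply Minkowski's inequality in $L^{2}(\mathbb{P}_{\rho})$ to separate the time integral of the weighted sum into the corresponding two pieces and bound each by the appropriate lemma.

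To the first piece I apply Lemma \ref{moneblock} at the fixed scale $\ell_{0}$: the resulting $Ct\,\ell_{0}^{3}/n^{2}\sum_{x}h_{s}^{2}(x/n)$ has $\ell_{0}^{3}$ absorbed into a constant $C_{m}$ depending only on $m$, and since $c_{m}(\ell)$ is bounded below by a strictly positive constant for $\ell\geq 2$, this is dominated by $C\,t\,c_{m}(\ell)/n^{2}\sum_{x}h_{s}^{2}(x/n)$. To the second piece I apply Lemma \ref{mtwoblocks}, using the refined case analysis written out in its proof rather than the crude uniform bound in its statement. The sharp dependence $c_{m}(\ell)$ traces back to the variance estimate $\mathrm{Var}_{\rho}[\tilde V_{\ell,2\ell}^{m}]\leq C\ell^{-m}$ from Lemma \ref{mrenstep1}, whose $\ell^{-m}$ factor reflects the order of vanishing of $E_{\sigma}[f_{m}^{x}]=(\sigma-\rho)^{m}$ at $\sigma=\rho$. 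Summing the geometric series $\sum_{i=1}^{M}2^{i(3-m)/2}$ over the dyadic renormalization steps with this variance input produces exactly the four regimes encoded in \eqref{clm}: geometric in $\ell$ for $m=1$, linear for $m=2$, logarithmic for $m=3$, and convergent for $m\geq 4$.

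Combining the two square roots by Minkowski and squaring yields a bound of the form $Ct\,c_{m}(\ell)/n^{2}\sum_{x}h_{s}^{2}(x/n)$ for the time integral of $\sum_{x}h_{s}(x/n)\bigl\{f_{m}^{x}-E_{\rho}[f_{m}^{x}\mid\eta^{\ell}(x)]\bigr\}$. Pulling out the prefactor $1/\theta(n)^{2}$ and converting via $\sum_{x}h_{s}^{2}(x/n)=n\,\|h_{s}\|_{2,n}^{2}$ produces exactly the claimed $Ct\,c_{m}(\ell)/(n\,\theta(n)^{2})\,\|h\|_{2,n}^{2}$. The only real subtlety I anticipate is the time dependence of $h_{s}$: the Kipnis--Varadhan variational bound behind Lemmas \ref{moneblock} and \ref{mtwoblocks} extends to time-inhomogeneous integrands by running the variational step uniformly in $s$, which is standard and leaves every exponent unchanged. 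So in the end the obstacle is bookkeeping of constants rather than any new ingredient — the substantive input, namely the sharp variance $\mathrm{Var}_{\rho}[\tilde V_{\ell,2\ell}^{m}]=O(\ell^{-m})$, is already in hand.
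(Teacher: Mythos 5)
Your proposal is correct and follows essentially the same route as the paper, which presents Theorem \ref{genbg} precisely as the combination of the one-block estimate (Lemma \ref{moneblock}) at a fixed scale with the two-blocks estimate (Lemma \ref{mtwoblocks}) via Minkowski's inequality; your explicit choice $\ell_0=\max(m,2)$ and the absorption of the resulting $\ell_0^3/n^2$ error into the constant is exactly the intended bookkeeping. Your remark that one must invoke the case-by-case bounds from the \emph{proof} of Lemma \ref{mtwoblocks} rather than its stated uniform bound is well taken, since the stated bound $Ct\ell/n^2$ does not even cover the $m=1$ regime where the correct order is $\ell^2/n^2$.
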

In order to conclude we have the following scheme.

\begin{enumerate}
\item $m=1$

\medskip

From the previous estimates we have that for $\theta(n)>\sqrt n$
\begin{equation*}
\lim_{n\to+\infty}\mathbb{E}_{\rho}\Big[\Big(\int_{0}^{t}
\frac{1}{\theta(n)}\sum_{x\in{\mathbb{Z}}}\nabla_n h\Big(\frac{x}{n}\Big)f_m^x(\eta_s)\, ds\Big)^{2}\Big]=0,
\end{equation*}
while for $\theta(n)=\sqrt n$, since $E_\rho\Big[f_m^x(\eta_s)\Big|\eta^{\varepsilon n}_s(x)\Big]=\bar\eta^{\varepsilon n}_s(x)=\frac{1}{\sqrt n}\mathcal{Y}_t^n(\iota_\varepsilon(x/n))$  we have
\begin{equation*}
\begin{split}
\int_{0}^t\frac{1}{\sqrt n}\sum_{x\in{\mathbb{Z}}}\nabla_n h\Big(\frac{x}{n}\Big)\bar\eta_s(x) \,ds\sim \int_{0}^t&\frac{1}{ n}\sum_{x\in{\mathbb{Z}}}\nabla_n h\Big(\frac{x}{n}\Big)\mathcal{Y}_t^n(\iota_\varepsilon(x/n))\, ds\\&\downarrow\\
\int_{0}^t\int_{\mathbb{R}}&\nabla h(x)\mathcal{Y}_t(\iota_\varepsilon(0))\, ds
\end{split}
\end{equation*}

\item $m=2$

\medskip

From the previous estimates, for $\theta(n)=n^{\delta}$, with $\delta>0$ we have
\begin{equation*}
\lim_{n\to+\infty}\mathbb{E}_{\rho}\Big[\Big(\int_{0}^{t}
\frac{1}{\theta(n)}\sum_{x\in{\mathbb{Z}}}\nabla_n h\Big(\frac{x}{n}\Big)f_m^x(\eta_s)\, ds\Big)^{2}\Big]=0,
\end{equation*}
while for $\theta(n)=1$, since $E_\rho\Big[f_m^x(\eta_s)\Big|\eta^{\varepsilon n}_s(x)\Big]=(\bar\eta^{\varepsilon n}_s(x))^2-\frac{1}{\varepsilon n-1}\chi(\eta^{\varepsilon n}(x))$ and since $(\bar\eta^{\varepsilon n}_s(x))^2=\frac{1}{n}(\mathcal{Y}_t^n(\iota_\varepsilon(x/n)))^2 $ we get
\begin{equation*}
\begin{split}
\int_{0}^t\sum_{x\in{\mathbb{Z}}}\nabla_n h\Big(\frac{x}{n}\Big)\bar\eta_s(x)\bar\eta_s(x+1) \,ds\sim \int_{0}^t&\frac{1}{ n}\sum_{x\in{\mathbb{Z}}}\nabla_n h\Big(\frac{x}{n}\Big)(\mathcal{Y}_t^n(\iota_\varepsilon(x/n)))^2\, ds\\&\downarrow\\
\int_{0}^t\int_{\mathbb{R}}&\nabla h(x)(\mathcal{Y}_t(\iota_\varepsilon(0)))^2\, ds.
\end{split}
\end{equation*}

\end{enumerate}

\subsection{The Boltzmann-Gibbs Principle for general functions}

In this section we rephrase the results of the previous subsection in terms of more general functions. First we introduce the notion of the degree of a function.
\begin{definition}
Let $f:\Omega\to\mathbb{R}$ be a local function and for $\sigma\in{[0,1]}$, recall that $\tilde{f}(\sigma)=E_{\sigma}[f]$.
The function $f$ is said to have degree $m\in\mathbb{N}$ if $\tilde{f}^j(\rho)=0$, for all $j=0,\cdots,m-1$ and $\tilde{f}^m(\rho)\neq {0}$.
\end{definition}

Notice that the function $f_m^x$ defined above has degree $m$.
From the previous results we have the following bounds:

\begin{corollary}
Let $m\in\mathbb{N}$ and let $f:\Omega\to\mathbb{R}$ be a local function of degree $m$ with support contained in $\{x,x+1,\cdots, x+\ell\}$, with $\ell>m$.
For every $t>0$, $\ell\geq 2$ and any measurable function $h:[0,T]\times\mathbb{Z}\to\mathbb{R}$,
\begin{equation*}
\mathbb{E}_{\rho}\Big[\int_{0}^{t}
\frac{1}{\theta(n)}\sum_{x\in{\mathbb{Z}}}h_s\Big(\frac{x}{n}\Big)\Big\{f(\eta_s)-E_\rho\Big[f(\eta_s)\Big|\eta^{\ell}_s(x)\Big]\Big\}\, ds\Big]^2\leq{Ct\frac{c_m(\ell)}{n(\theta(n))^2}\|h\|_{2,n}^2},
\end{equation*}
where
$c_m(\ell)$ was given in \eqref{clm}.
\end{corollary}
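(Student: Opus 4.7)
The plan is to deduce the corollary from the General Boltzmann--Gibbs Principle (Theorem \ref{genbg}) by expanding $f$ in the natural Bernoulli chaos basis built from the $\bar\eta(y)$'s, and then re-running the four-lemma scheme (one-block, renormalization, $L$-renormalization, two-blocks) almost verbatim. The only new input is the identification of the right variance bounds for the conditional expectations.

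First I would write the chaos expansion
\begin{equation*}
f(\eta) = \sum_{A \subset \{x,\ldots,x+\ell\}} c_A \prod_{y \in A} \bar\eta(y),
\end{equation*}
which is legitimate because $\bar\eta(y)^{2} = (1-2\rho)\bar\eta(y) + \chi(\rho)$, so every local function supported in $\{x,\ldots,x+\ell\}$ is a multilinear polynomial in the $\bar\eta(y)$'s. A direct computation gives $\tilde f(\sigma) = \sum_k (\sigma - \rho)^k \sum_{|A|=k} c_A$, so the degree-$m$ hypothesis is exactly $\sum_{|A|=k} c_A = 0$ for every $k < m$, while $\sum_{|A|=m} c_A \neq 0$.

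Next, by exchangeability of $\nu_\rho$ conditioned on $\eta^\ell(x)$ (the conditioned law is uniform over configurations with a prescribed particle number in the box), the conditional expectation $E_\rho[\prod_{y\in A}\bar\eta(y)\mid \eta^\ell(x)]$ depends on $A$ only through $|A|$. Using the hypergeometric formula already exploited to establish \eqref{condexp}, one checks that this conditional expectation equals a polynomial $P^{(\ell)}_{|A|}(\eta^\ell(x))$ whose leading chaos is $(\bar\eta^\ell(x))^{|A|}$, and consequently
\begin{equation*}
\Var_\rho\bigl[P^{(\ell)}_k(\eta^\ell(x))\bigr] \leq \frac{C_k}{\ell^k}.
\end{equation*}
Combined with the cancellation $\sum_{|A|=k}c_A = 0$ for $k<m$, this yields the clean identity
\begin{equation*}
E_\rho[f\mid\eta^\ell(x)] = \sum_{k \geq m} \Big(\sum_{|A|=k}c_A\Big)\, P^{(\ell)}_k(\eta^\ell(x)),
\end{equation*}
and hence the two variance estimates that drive the whole scheme:
\begin{equation*}
\Var_\rho\bigl[f - E_\rho[f\mid\eta^\ell]\bigr] \leq C, \qquad \Var_\rho\bigl[E_\rho[f\mid\eta^\ell] - E_\rho[f\mid\eta^{2\ell}]\bigr] \leq \frac{C}{\ell^m}.
\end{equation*}

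Once these two inputs are in hand, the rest is a word-for-word repetition of Lemmas \ref{moneblock}--\ref{mtwoblocks}: the variational $H_{-1}$ argument only sees the $L^2(\nu_\rho)$ size of the local function and of the increment of the conditional expectations, so the one-block step costs $C t\,\ell^{3}/n^{2}$, the renormalization and $L$-renormalization steps cost $C t\,\ell^{3-m}/n^{2}$ and $C t\,L^{3}/(n^{2}\ell^{m})$, and the telescoping argument of Lemma \ref{mtwoblocks} reproduces exactly the threshold function $c_m(\ell)$ of \eqref{clm}. The main obstacle is the step performed in the previous paragraph, namely translating the algebraic vanishing of $\tilde f^{(j)}(\rho)$ for $j<m$ into the analytic variance decay $C/\ell^{m}$ for the difference of conditional expectations; once this is established, linearity and the estimates already proved do the rest.
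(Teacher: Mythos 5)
Your proof is correct, but it takes a genuinely different route from the paper's. The paper does not re-run the one-block/renormalization/two-blocks machinery for a general $f$; instead it reduces to the monomial case already established by setting $\psi(\eta)=f(\eta)-\frac{\tilde f^{m}(\rho)}{m!}\prod_{y=x}^{x+m-1}\bar\eta(y)$, noting that $\tilde\psi(\sigma)=\tilde f(\sigma)-\frac{\tilde f^{m}(\rho)}{m!}(\sigma-\rho)^{m}$ makes $\psi$ of degree at least $m+1$, splitting $f-E_\rho[f\mid\eta^\ell(x)]$ into the two corresponding differences, and concluding with $(x+y)^2\le 2x^2+2y^2$ and $c_{m+1}(\ell)\le c_m(\ell)$ (implicitly iterating the peeling on $\psi$). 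You instead expand $f$ completely in the basis $\{\prod_{y\in A}\bar\eta(y)\}$, translate ``degree $m$'' into the cancellation $\sum_{|A|=k}c_A=0$ for $k<m$, use exchangeability of the conditioned measure to see that only the chaoses of order at least $m$ survive the conditioning, and read off the two variance inputs $\Var_\rho\bigl[f-E_\rho[f\mid\eta^\ell]\bigr]\le C$ and $\Var_\rho\bigl[E_\rho[f\mid\eta^\ell]-E_\rho[f\mid\eta^{2\ell}]\bigr]\le C\ell^{-m}$ that drive the four lemmas; this is legitimate because the $H_{-1}$/spectral-gap arguments indeed only see these variances and the fact that the relevant $V$'s are centered given the conserved quantity in the box. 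What your version buys is self-containedness: it makes the mechanism (conditioning annihilates the low-order chaos) explicit and avoids the hidden recursion on the degree in the paper's one-line conclusion. What the paper's version buys is brevity: all the analytic work was already done for the monomials $f_m^x$, so nothing is re-proved. Two details you should make explicit: subsets $A$ not contained in the averaging window $\{x,\dots,x+\ell-1\}$ contribute zero to $E_\rho[\,\cdot\mid\eta^\ell(x)]$, since the outside factors are independent and centered (this is what justifies ``depends on $A$ only through $|A|$''); and the initial one-block step should be performed on a box of fixed size depending only on the support of $f$, so that its cost $Ct\ell_0^{3}/n^{2}$ is absorbed into the constant.
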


\begin{proof}
Recall the function $f_m^x$ given above. For a function $f$ with degree $m$, we define the auxiliary function $$\psi(\eta)=f(\eta)-\frac{\tilde{f}^m(\rho)}{m!}\prod_{y=x}^{x+m-1}\bar\eta(y),$$
where $\tilde{f}^m(\rho)=\frac{d^m\tilde {f}}{d\rho^m}(\rho)$ which is non zero,  since $f$ has degree $m$.
A simple computation shows that $$\tilde{\psi}(\sigma)=\tilde{f}(\sigma)-\frac{\tilde{f}^m(\rho)}{m!}(\sigma-\rho)^m,$$ from where it follows that the degree of $\psi$ is greater or equal than $m+1$.
Therefore, as a consequence of the previous results, by using the inequality $(x+y)^2\leq 2x^2+2y^2$, the fact that $c_{m+1}(\ell)\leq c_{m}(\ell)$ and also by writing

\begin{equation*}
\begin{split}
f(\eta)-E_\rho\Big[f(\eta)\Big|\eta^{\ell}(x)\Big]&=f(\eta)-\frac{\tilde{f}^m(\rho)}{m!}\prod_{y=x}^{x+m-1}\bar\eta(y)\\
&+\frac{\tilde{f}^m(\rho)}{m!}\prod_{y=x}^{x+m-1}\bar\eta(y)-E_\rho\Big[\frac{\tilde{f}^m(\rho)}{m!}\prod_{y=x}^{x+m-1}\bar\eta(y)\Big|\eta^{\ell}(x)\Big]\\
&+E_\rho\Big[\frac{\tilde{f}^m(\rho)}{m!}\prod_{y=x}^{x+m-1}\bar\eta(y)\Big|\eta^{\ell}(x)\Big]-E_\rho\Big[f(\eta)\Big|\eta^{\ell}(x)\Big]\\
&=\psi(\eta)-E_\rho\Big[\psi(\eta)\Big|\eta^{\ell}(x)\Big]\\&+\frac{\tilde{f}^m(\rho)}{m!}\prod_{y=x}^{x+m-1}\bar\eta(y)-E_\rho\Big[\frac{\tilde{f}^m(\rho)}{m!}\prod_{y=x}^{x+m-1}\bar\eta(y)\Big|\eta^{\ell}(x)\Big],
\end{split}
\end{equation*}
the proof ends.

\end{proof}

\section*{Acknowledgements}
The author thanks CNPq (Brazil) for support through the research project ``Additive functionals of particle systems" 480431/2013-2 and CMAT for support by ``FEDER" through the
``Programa Operacional Factores de Competitividade  COMPETE" and by
FCT through the project PEst-C/MAT/UI0013/2011.

\end{document}